\documentclass[12pt]{article}

\usepackage{amsthm,amsfonts, amsbsy, amssymb,amsmath,graphicx}
\usepackage{graphics}
\usepackage[english]{babel}
\usepackage{graphicx}
\usepackage{lineno}
\usepackage{enumerate}
\usepackage{tikz}
\usepackage{tkz-graph}
\usepackage{tkz-berge}
\usepackage{hyperref}
\usepackage{color}
\usepackage{tikz}
\usepackage{float}
\hypersetup{colorlinks=true}
\usepackage{todonotes}
\hypersetup{colorlinks=true, linkcolor=blue, citecolor=blue,urlcolor=blue}
\usepackage{algorithm}
\usepackage{algpseudocode}
\algnotext{EndFor}
\algnotext{EndIf}

\newtheorem{theorem}{Theorem}[section]

\newtheorem{lemma}[theorem]{Lemma}
\newtheorem{proposition}[theorem]{Proposition}

\newtheorem{corollary}[theorem]{Corollary}
\newtheorem{remark}[theorem]{Remark}
\newtheorem{fact}[theorem]{Fact}

\newcommand{\mmd}{{\rm mmd}}
\newcommand{\ex}{{\rm ex}}
\newcommand{\twin}{{\rm t}}
\newcommand{\res }{{\rm res}}
\newcommand{\td}{{\rm td}}

\newcommand{\move}{\rightsquigarrow}

\tikzstyle{new style 0}=[fill={rgb,255: red,191; green,191; blue,191}, draw=none, shape=circle, tikzit shape=circle]
\tikzstyle{new style 1}=[fill=white, draw=black, shape=circle]
\tikzstyle{new style 2}=[fill=none, draw=none, shape=circle]

\tikzstyle{new edge style 0}=[-]

\begin{document}
\title{Completely mobile robots in graphs with a resolving property}

\author{Jos\'e C. Camacho Moreno $^{a}$ \\ \texttt{\footnotesize josecarlos.camacho@uca.es}  \\
\and
  Dorota Kuziak $^{b}$ \\ \texttt{\footnotesize dorota.kuziak@uca.es} \\
\and
  Ethan Shallcross $^{c}$ \\ \texttt{\footnotesize ershallcross@gmail.com} \\
\and
  James Tuite $^{c,d,e}$ \\ \texttt{\footnotesize james.t.tuite@open.ac.uk} \\
\and
  Ismael G. Yero $^{a}$ \\ \texttt{\footnotesize ismael.gonzalez@uca.es} \\
}

\maketitle

\begin{center}
$^a$ Departamento de Matem\'aticas, Universidad de C\'adiz, Algeciras Campus, Spain\\[0.1cm]

$^b$ Departamento de Estad\'istica e IO, Universidad de C\'adiz, Algeciras Campus, Spain\\[0.1cm]

$^c$ School of Mathematics and Statistics, Open University, Milton Keynes, UK\\[0.1cm]

$^d$ Department of Informatics and Statistics, Klaip\.{e}da University, Lithuania

$^e$ Faculty of Mathematics and Informatics, Vilnius University, Lithuania
\end{center}

\begin{abstract}
This work extends the study of mobile resolvability in connected graphs by introducing completely mobile resolving sets. As with mobile resolving sets, we consider stationing a collection of robots at the vertices in a resolving set of a graph. Robots may move one at a time to any adjacent vertex as long as set of occupied vertices remains a resolving set. In this new variant, we ask that every robot can traverse every vertex in the graph through a sequence of permitted moves.

Graphs possessing at least one completely mobile resolving set are termed mobile-resolvable. We completely characterise the non-mobile-resolvable graphs as paths of order at least three and subdivided stars with three or more leaves and at most two subdivided edges. For trees, we explicitly determine the completely mobile metric dimension, showing that it exceeds the classical metric dimension by one for most mobile-resolvable trees, and by two for certain subdivided stars. Furthermore, we evaluate this parameter across fundamental graph classes—including cycles, complete bipartite graphs, grid graphs, and circulant graphs.
\end{abstract}

\noindent
{\bf Keywords}: mobile metric dimension; mobile resolving sets; metric dimension; resolving sets; mobile robots in networks \\

\noindent
{\bf AMS Subj.\ Class.\ (2020)}: 05C12; 05C76

\section{Introduction}\label{sec:intro}

The concept of distance-based resolvability in graphs, introduced independently in the 1970s by Slater~\cite{Slater} as well as by Harary and Melter~\cite{Harary}, has long served as a fundamental framework for uniquely identifying vertices within a network structure. Given a connected graph $G$, a subset of vertices $S \subseteq V(G)$ is called a \emph{resolving set} for $G$ if every vertex of $G$ is uniquely determined by its vector of distances to the vertices in $S$. The minimum cardinality among all such subsets is the \emph{metric dimension} of $G$, denoted by $\dim(G)$.

Originally motivated by problems in sonar/radar tracking and landmark-based navigation, the classical metric dimension and its variations have found widespread applications across diverse domains, including wireless sensor network positioning, chemical structure identification, robotic navigation, privacy in social networks, and information propagation analysis in complex networks. On the other hand, to address real-world operational constraints and varying network topologies, researchers have developed numerous variations of classical resolvability. Some examples include fault-tolerant metric dimension, local metric dimension, partition dimension, fractional metric dimension, and multiset dimension, each modifying the structural conditions placed on the resolving subset or the distance vectors. In order to avoid including many different citations for these applications and variations, we instead direct the reader to the four recent surveys~\cite{Albejani,Farhan,dorota-2022+,till-2022+}, which cover almost all investigations on resolvability parameters of graphs. In addition, we mention the papers~\cite{Cameron,Farhan26,Foster,Knor,Korivand}, which are some of the most recent articles on the classical metric dimension of graphs.

Traditional models of resolvability in graphs have assumed a static deployment of landmarks or sensors. In~\cite{KlaKriTuiYer-2023} a dynamic version of another popular graph theory topic, the general position problem, was introduced, and this was later extended to mutual visibility sets in~\cite{Detlaff} and resolving sets in~\cite{camacho}. In~\cite{camacho}, a small number of autonomous agents (which we picture as mobile robots) traverse the graph over discrete time steps, so that each node is at some point visited by a robot. At each step, the positions of the robots must collectively form a resolving set.

While the original mobile resolving set model guarantees continuous network monitoring with a minimal number of mobile agents, many practical scenarios might demand stricter operational criteria. For example, in multi-robot exploration, network security patrolling, infrastructure inspection, or environmental mapping, every individual agent must actively inspect and traverse every node of the network. In other words, we require not only that every node of the network can be inspected by some robot, but that every robot is able to visit every node. Such a problem was considered in the context of the general position problem in~\cite{Shallcross-2025}, under the name \emph{completely mobile general position sets}. In this paper we study \emph{completely mobile resolving sets}. In this model, the configuration of robots must always form a resolving set, and every robot must be able to visit every vertex of the graph.

We now formalise all the key terminology and notation that we need in our exposition. We consider a collection of robots positioned at a set of nodes of a network, with at most one robot per vertex, such that the occupied vertices form a resolving set; that is, each vertex of the graph is uniquely identified by its distances to the robots. At each step, one robot can move along an edge to an unoccupied vertex if the resulting configuration is also a resolving set. Through a sequence of such moves, we require that each robot can visit every vertex of the graph. To optimise resources, we ask for the smallest possible number of robots that can fulfil these conditions.

We assume that $G$ is a connected, undirected and simple graph without loops or multiple edges. The set of vertices and edges of $G$ shall be written as $V(G)$ and $E(G)$, respectively. Given two vertices $u,v\in V(G)$, the \emph{distance} between $u$ and $v$ is the length of a shortest $u,v$-path, and is denoted by $d_G(u,v)$ (or simply $d(u,v)$ if $G$ is clear from the context). The two vertices $u,v$ are \emph{resolved}, \emph{identified} or \emph{recognised} by a vertex $x\in V(G)$ if $d_G(u,x)\ne d_G(v,x)$; we also say that $x$ resolves, identifies or recognises the pair of vertices $u,v$. Moreover, by $R\{x, y\}$ we mean the set of all vertices of $G$ that resolve the pair $x,y$ (note that $x,y$ are themselves in $R\{x, y\}$). In this sense, a set of vertices $S\subseteq V(G)$ is a \emph{resolving set} of $G$ if every pair of vertices of $G$ is resolved by some vertex of $S$. A resolving set with the smallest possible cardinality is called a \emph{metric basis}. The cardinality of a metric basis in $G$ is the \emph{metric dimension} of $G$, denoted by $\dim(G)$. Moreover, if $S=\{v_1,\dots,v_k\}$ is any subset of $V(G)$, then for any vertex $x\in V(G)$, the \textit{metric representation} of $x$ is the vector $r(x|S)=(d_G(x,v_1),\dots,d_G(x,v_k))$.

Given a resolving set $S$ of a graph $G$ occupied by a set of robots, and a vertex $v$ of $S$, the robot located at the vertex $v$ makes a \emph{valid} or \emph{legal resolving movement}, if it moves from $v$ to an unoccupied neighbour $v'$, such that the new set $(S\setminus \{v\} )\cup \{v'\}$ is also a resolving set of $G$. A valid or legal resolving movement from $u$ to $v$ shall be denoted by $u\move v$. In addition, if we set $S' = (S\setminus \{v\} )\cup \{v'\}$, then we can also represent the move $u\move v$ as a set version as $S\move S'$.

Having these notions in mind, a given set of vertices $S\subseteq V(G)$ is called a \emph{mobile resolving set} for the graph $G$ if each vertex of $G$ can be visited by at least one robot from the set of robots initially placed at vertices of $S$ by some sequence of valid resolving movements. The \emph{mobile metric dimension} of $G$ is the cardinality of a smallest mobile resolving set of $G$ and is denoted by $\mmd(G)$. A mobile resolving set of cardinality $\mmd(G)$ is called a \emph{mobile metric basis}. The concepts above were first introduced and studied in~\cite{camacho}.

We now consider strengthening our constraints. We say that the set $S\subseteq V(G)$ is a \emph{completely mobile resolving set} if each vertex of $G$ can be visited by each of the robots initially placed at vertices of $S$ by a sequence of valid resolving movements. The \emph{completely mobile metric dimension} of $G$ is the cardinality of a smallest completely mobile resolving set of $G$ and is denoted by $\mmd_c(G)$. A completely mobile resolving set of cardinality $\mmd_c(G)$ is called a \emph{completely mobile metric basis}. From now on, in order to facilitate our exposition, by `a robot $x$' we mean a robot that is placed at a vertex $x\in V(G)$.

One might immediately notice that not every graph $G$ contains a completely mobile resolving set. A trivial example is a path graph $P_n$ on at least three vertices. It is a well-known result that $P_n$ has metric dimension one and only two metric bases, each one formed by any of its two leaves. Clearly, a robot placed at the unique vertex of a metric basis of a $P_n$ ($n\ge 3$) cannot make any valid resolving movement, and so a completely mobile metric basis of $P_n$ must have at least two robots $x$ and $y$; however, the robot $y$ is unable to cross the robot $x$ in order to visit the leaf of $P_n$ closest to $x$, so that such a configuration cannot be completely mobile. Thus $P_n$ has no completely mobile resolving set for $n \geq 3$. From now on, we say that $G$ is \emph{mobile-resolvable} if it contains at least one completely mobile resolving set.

\subsection{Basic terminology and required results}\label{subsec:basic}

Throughout this paper, we shall write $[k]=\{1,\dots,k\}$ and $[k]^*=\{0,1,\dots,k\}$. The \textit{Cartesian product} of two graphs $G$ and $H$ is the graph $G\Box H$ whose vertex set is $V(G\Box H)=V(G)\times V(H)$, and two vertices $(g,h), (g',h')$ are adjacent in $G\Box H$ if either $g=g'$ and $hh'\in E(H)$; or $gg'\in E(G)$ and $h=h'$. The vertex set of the cycle graph $C_n$ will be identified with the cyclic group $\mathbb{Z}_n$ in the natural way, so all operations are performed modulo $n$. A graph $G$ is \emph{$k$-connected} if for any subset $S \subseteq V(G)$ with $|S| < k$ the graph $G-S$ is connected. If there is a vertex $u$ such that $G-u$ is disconnected, then $u$ is a \emph{cut vertex}. A graph containing a cut vertex is \emph{separable}.

A vertex of degree at least~$3$ in a tree $T$ is called a \emph{major vertex}. A leaf $u$ of $T$ is called a \emph{terminal leaf} of a major vertex $v$ of $T$ if $d_T(u,v) < d_T(u,w)$ for every other major vertex $w$ of $T$. The \emph{terminal degree} $\td(v)$ of a major vertex $v$ is the number of terminal leaves associated with~$v$. A major vertex $v$ of $T$ is an \emph{exterior major vertex} of $T$ if $\td(v)\ge 1$. The path joining a terminal leaf with its corresponding major vertex without the major vertex itself is called a \emph{terminal path}. Clearly, every vertex on a terminal path in $T$ is either a leaf of $T$ or has degree~$2$ in $T$. A vertex on a terminal path that has degree~$2$ in $T$ is called an \emph{internal terminal vertex}. If a terminal path has at least one internal terminal vertex, then it is called a {\em non-trivial terminal path}. In this sense, a trivial terminal path is formed by a single vertex, which is a leaf of $T$ and adjacent to the exterior major vertex. We represent the number of leaves of $T$ by $n_1(T)$, and by $\ex(T)$ we mean the number of exterior major vertices of $T$.

The value of the classical metric dimension of trees has been reported in different works (see the survey~\cite{till-2022+}). For any tree $T$,
\begin{equation}\label{eq-dim-trees}
\dim(T)=n_1(T)-\ex(T).
\end{equation}

In addition to the formula above, it is known that the structure of any metric basis of a tree is as follows. For each exterior major vertex $v$ of $T$ with $\td(v)=k\ge 1$, we take $k-1$ vertices, in such a way that all but one of the terminal paths of $v$ contain exactly one vertex from the metric basis. Figure \ref{fig-tree-basis} contains one example of a metric basis of a tree.

\begin{figure}[h]
\centering
\begin{tikzpicture}[scale=.57, transform shape]
\node [draw, shape=circle] (x1) at  (0,0) {};
\node [draw, shape=circle] (y1) at  (2,0) {};
\node [draw, shape=circle, fill=black] (z1) at  (4,0) {};
\node [draw, shape=circle] (x2) at  (7,0) {};
\node [draw, shape=circle] (y2) at  (9,0) {};
\node [draw, shape=circle] (z2) at  (11,0) {};
\node [draw, shape=circle] (x3) at  (14,0) {};
\node [draw, shape=circle] (y3) at  (16,0) {};
\node [draw, shape=circle] (z3) at  (18,0) {};
\node [draw, shape=circle] (x4) at  (21,0) {};
\node [draw, shape=circle] (y4) at  (23,0) {};
\node [draw, shape=circle] (z4) at  (25,0) {};
\node [draw, shape=circle, fill=black] (y22) at  (9,-2.5) {};
\node [draw, shape=circle, fill=black] (x33) at  (14,-2.5) {};
\node [draw, shape=circle, fill=black] (z33) at  (18,-2.5) {};
\node [draw, shape=circle, fill=black] (x44) at  (21,-2.5) {};
\node [draw, shape=circle, fill=black] (y44) at  (23,-2.5) {};
\node [draw, shape=circle] (z44) at  (25,-2.5) {};
\node [draw, shape=circle, fill=black] (x444) at  (0,-2.5) {};
\node [draw, shape=circle] (y444) at  (4,-2.5) {};
\node [draw, shape=circle, fill=black] (z444) at  (7,-2.5) {};
\node [draw, shape=circle, fill=gray] (c1) at  (2,3) {};
\node [draw, shape=circle, fill=gray] (c2) at  (9,3) {};
\node [draw, shape=circle] (x) at  (12.5,3) {};
\node [draw, shape=circle, fill=gray] (c3) at  (16,3) {};
\node [draw, shape=circle, fill=gray] (c4) at  (23,3) {};
\node [draw, shape=circle, fill=gray] (d1) at  (5.5,3) {};
\node [draw, shape=circle, fill=gray] (d2) at  (19.5,3) {};
\node [draw, shape=circle] (e1) at  (5.5,5) {};
\node [draw, shape=circle] (e2) at  (19.5,5) {};

\draw(x1)--(c1)--(y1);
\draw(c1)--(z1);
\draw(d1)--(e1);
\draw(d2)--(e2);

\draw(x444)--(x1);
\draw(y444)--(z1);
\draw(z444)--(x2);

\draw(c1)--(d1)--(c2)--(x)--(c3)--(d2)--(c4);
\draw(y22)--(y2)--(c2)--(x2);
\draw(c2)--(z2);

\draw(x33)--(x3)--(c3)--(z3)--(z33);
\draw(c3)--(y3);

\draw(x44)--(x4)--(c4)--(z4)--(z44);
\draw(c4)--(y4)--(y44);
\end{tikzpicture}
\caption{A tree $T$ provided with a metric basis in bold. Exterior major vertices appear in grey.}
\label{fig-tree-basis}
\end{figure}

A particular family of trees that is of interest in our exposition is formed by subdivided star graphs. That is, if $S_n$ is the star graph on $n$ leaves (whose order is $n+1$) and $k\ge 0$, then we say that $S_n(k)$ is a \textit{subdivided star} if it is obtained from a star $S_n$ by subdividing $k$ of its edges. By subdividing an edge $uv$ we mean removing the edge $uv$, adding a path $P_\ell=12\cdots \ell$ (with $\ell\ge 1$), and adding the edges $u1,\ell v$. Notice that $S_n(0)$ is simply the star $S_n$ and that $S_n(k)$ has order at least $n+k+1$. We remark that, in our work, we shall not take into account how many vertices are used to subdivide an edge, only that at least one is used.

\subsection{Plan of the article}

The rest of the article is organised into four main sections. In Section~\ref{sec:basic}, we establish fundamental bounds for the completely mobile metric dimension, investigate graphs which attain these bounds, and show that the difference between the mobile and completely mobile metric dimensions of a graph can be arbitrarily large. We give the completely mobile metric dimensions of basic graph families such as cycles, grids, and complete graphs. Section~\ref{sec:exclud-non-res} delivers the primary theoretical results of the paper, by completely characterising non-mobile-resolvable graphs (paths of order at least three and specific subdivided stars) and determining the exact completely mobile metric dimension for all mobile-resolvable trees. Section~\ref{sec:circulant} focuses on circulant graphs $C_n(2)$, providing exact dimensional values and explicit robot movement patterns based on $n \pmod 4$. Finally, Section~\ref{sec:open} concludes by giving several potential directions for future study.

\section{A few basic results}\label{sec:basic}

Clearly, any completely mobile resolving set is a mobile resolving set, which is also a resolving set. Trivially, the whole vertex set of a graph $G$ of order $n \geq 2$ cannot be a completely mobile resolving set of $G$, as there are no available valid moves. Hence for any mobile-resolvable graph $G$ of order $n \geq 2$,

\begin{equation}\label{eq-completely-mobile-not-tot}
1\le \dim(G)\le \mmd(G)\le \mmd_c(G)\le n-1.
\end{equation}

We first study completely mobile resolving sets in some basic graph classes. As previously discussed, every path $P_n$ is not mobile-resolvable, unless $n \leq 2$, in which case it is clear that $\mmd_c(P_2)=1$. The path graph is the only graph with metric dimension one, so for any other mobile-resolvable graph $G$ we have $\mmd_c(G) \geq 2$. It is also easily seen that for any complete graph $K_n$ with $n \geq 2$ we have $\mmd_c(K_n)=n-1$.

We now determine the completely mobile metric dimension of the cycle graph $C_n$ (recall that we take the vertex set of $C_n$ to be $\mathbb{Z}_n$). For $n \ge 5$ it is readily observed that for any $k \in \mathbb{Z}_n$ the pairs $\{ k,k+1\} $ and $\{ k, k+2\} $ are resolving sets of $C_n$. Since the set $\{k, k+2\}$ can be obtained from $\{k, k+1\}$ by performing the move $k+1\move k+2$, we can start with the resolving set $\{ 0,1\} $ and repeat the sequence $\{ k,k+1\} \move \{ k,k+2\} \move \{ k+1,k+2\} $ for $k = 0,1,\ldots ,n-2$, and in this manner both robots visit every vertex of the cycle by valid moves. Hence $\mmd_c(C_n) = 2$ for $n \geq 5$. For the smaller cases, we have $\mmd_c(C_3)=2$ and $\mmd_c(C_4)=3$.

The example of cycles of order at least five shows that equality can hold for the three parameters $\dim(G)$, $\mmd(G)$ and $\mmd_c(G)$ in Inequality~\ref{eq-completely-mobile-not-tot}. That is, for any cycle $C_n$ with $n\ge 5$,

\begin{equation}
    \label{eq:cycles}
    \dim(C_n) = \mmd(C_n) = \mmd_c(C_n)=2.
\end{equation}

Another classic family of graphs which give equality in part of the chain from~\eqref{eq-completely-mobile-not-tot} are the so-called grid graphs (the Cartesian product of two paths $P_n$ and $P_m$). We make the identification $V(P_r)=[r]$ for $r \geq 1$. Let $(1,1),(1,m),(n,1),(n,m)$ be the corners of the grid $P_n\Box P_m$. It is known that any two `consecutive' corners (for instance $(1,1)$ and $(1,m)$) of $P_n\Box P_m$ form a metric basis for $P_n\Box P_m$. It was proven in~\cite{camacho} that $\mmd(P_n\Box P_m)=3$ for any integers $n,m\ge 2$. Thus, from \eqref{eq-completely-mobile-not-tot}, we have that $\mmd_c(P_n\Box P_m)\ge 3$. When two robots are stationed at consecutive corners, the third robot can move freely through the rest of the grid, and also three robots can move around the border of the grid so that there are always two robots at consecutive corners. From these observations, it is easily seen that any initial configuration of three robots with two robots at consecutive corners is a completely mobile resolving set.

\begin{remark}
For any integers $n,m\ge 2$, $\mmd(P_n\Box P_m)=\mmd_c(P_n\Box P_m)=3$.
\end{remark}

For 2-connected graphs $G$, we can give an elementary bound on the completely mobile metric dimension in terms of the \emph{resolving number} $\res (G)$ introduced in~\cite{resolving}, which is defined to be the smallest value of $k$ such that every $k$-subset of $G$ is a resolving set. For any graph we have $\res (G) \leq n-1$.

\begin{lemma}\label{resolving bound}
    If $G$ is 2-connected, then $\mmd _c(G) \leq \res (G)$. In particular, $G$ is mobile-resolvable.
\end{lemma}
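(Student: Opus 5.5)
With $k=\res(G)$ robots, every configuration is automatically a resolving set, since every $k$-subset resolves $G$. So any move of a robot to an unoccupied neighbour is legal, and the statement reduces to a purely combinatorial question: in a 2-connected graph with $k\le n-1$ robots, can every robot reach every vertex using moves along edges to empty vertices? I would fix an arbitrary initial $k$-set $S$ and show that any chosen robot $r$ can be routed to any target vertex $t$, keeping the rest of the configuration flexible. Since $k\le n-1$, there is always at least one empty vertex (a ``hole''). Moving a robot into an adjacent hole is the same as moving the hole backwards, so the question becomes one of sliding tokens with at least one hole, i.e.\ pebble motion on graphs.

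The plan for moving robot $r$ to $t$ is as follows. First, the hole can be brought to any vertex $w$ by choosing a path from the hole to $w$ and shifting the robots along it one step each. The hard part is advancing $r$ one step along a path toward $t$ without disturbing $r$ itself. Suppose $r$ sits at $u$ and we want it at a neighbour $v$. We need to bring a hole to $v$ through a route avoiding $u$, and this is exactly where 2-connectivity is used: $G-u$ is connected. Take a hole $h\neq u$ (one exists since $r$ occupies $u$), choose an $h,v$-path in $G-u$, and shift the robots along it so that the hole arrives at $v$ while $r$ stays put. Then $r$ makes the move $u\move v$. Iterating along a shortest $u,t$-path brings $r$ to $t$. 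Since $r$ and $t$ were arbitrary and each step is valid, every robot visits every vertex, so $S$ is completely mobile and $\mmd_c(G)\le k=\res(G)$.

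Some care is needed at the hole-shifting step. Along the chosen path $h=p_0,p_1,\dots,p_m=v$ in $G-u$, let $p_j$ be the last empty vertex before $v$. The occupied vertices $p_{j+1},\dots,p_m$ are then successively moved back one step each, starting from $p_{j+1}\move p_j$. Each such move is to an empty neighbour, hence legal, and none involves $u$. Finally, we need $\res(G)$ to be well defined and at most $n-1$, which the paper notes, and $n\ge 3$ for 2-connectivity. Mobile-resolvability then follows immediately, since a completely mobile resolving set exists. I expect the main obstacle to be only the observation that the hole can be routed to $v$ avoiding $u$; this is handled by 2-connectivity, so the proof should be short.
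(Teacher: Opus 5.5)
Your proof is correct and follows essentially the same route as the paper. With $\res(G)$ robots every move to an empty neighbour is legal, and 2-connectivity lets you bring a hole to the next vertex of a shortest path through $G-u$ without disturbing the robot at $u$. The paper phrases this as a minimal counterexample on $d(u,v)$, whereas you iterate the step directly and spell out the hole-shifting more carefully; the content is the same.
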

\begin{proof}
    Let $G$ be 2-connected and set $k = \res (G)$. Start with any configuration of $k$ robots on $G$; we wish to show that this configuration is a completely mobile resolving set. Note that any move of a robot to an unoccupied neighbour is a valid move. Suppose for a contradiction that there is a pair of vertices $u,v \in V(G)$ such that there is a robot $R$ at $u$ and there is no sequence of valid moves that takes $R$ to $v$. Choose the pair $u,v$ so that $d(u,v)$ is as small as possible. Let $u'$ be a neighbour of $u$ with $d(u',v) = d(u,v)-1$. Let $P$ be shortest path in $G-u$ from $u'$ to the set of unoccupied vertices. By moving robots along these paths, we can make $u'$ unoccupied, and then move $R$ to $u'$; however, there can be no series of valid moves taking $R$ from $u'$ to $v$, contradicting the minimality of $d(u,v)$.
\end{proof}

We next show that there are also several graph classes in which the equality mentioned above never happens. In fact, the difference $\mmd_c(G) - \mmd(G)$ can be arbitrarily large.

\begin{theorem}
For all $q \geq 1$, there exists a graph $G$ with $\mmd_c(G) - \mmd(G) \geq q$.
\end{theorem}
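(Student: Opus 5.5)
The plan is an explicit construction in which many robots are pinned down by twin classes, together with a path bottleneck that prevents robots from exchanging places. Fix $t \geq 4$ and let $G_t$ be the ``barbell'' obtained from two disjoint copies $A$ and $B$ of $K_t$, with distinguished vertices $a\in A$ and $b\in B$, by joining $a$ and $b$ with a path $Q$ of length $\ell$ ($\ell\ge 2$, to be fixed later). The vertices of $A-a$ are pairwise twins, and so are those of $B-b$. Hence every resolving set contains at least $t-2$ vertices of $A-a$ and at least $t-2$ vertices of $B-b$. Conversely, any such choice already resolves $G_t$: a vertex of $A-a$ separates the vertices of $Q$ by distance, and the unique unoccupied twin in each clique is recognised because the occupied twins are at distance $1$ from it. Thus $\dim(G_t)=2(t-2)$.

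\textbf{Upper bound on $\mmd(G_t)$.} I would show $\mmd(G_t)\le 2(t-2)+1$. Place $t-2$ fixed robots in $A-a$ and $t-2$ in $B-b$, plus one extra robot. The extra robot can walk from the free twin of $A$ through $a$, along $Q$, to $b$ and into $B$. Every intermediate configuration contains the two pinned twin sets, so by the previous paragraph it is resolving. The pinned robots can also rotate within their cliques by moving into the free twin, so every vertex is visited by some robot.

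\textbf{Lower bound on $\mmd_c(G_t)$.} This is the heart of the proof and the step I expect to be hardest. Suppose $N$ robots form a completely mobile resolving set. At every moment $A-a$ holds at least $t-2$ robots and $B-b$ holds at least $t-2$ robots. Robots on the path $Q$ (including $a$ and $b$) cannot overtake one another; they can only reorder while inside a clique. Each robot initially in $A$ must reach $B$, and each robot initially in $B$ must reach $A$.

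I would take the robot $R$ that is the first to cross completely from one side to the other and examine the moment just before it does. The aim is to show that enough robots have to be ``in transit'' or stacked on one side that $N$ exceeds $2(t-2)$ by an amount growing with $t$, ideally $N\ge 3(t-2)$ or so. The natural tool is an exchange/ordering invariant: since robots preserve their linear order inside $Q$, every pair consisting of an $A$-robot and a $B$-robot must at some moment sit in the same clique. Combined with the twin lower bounds, this should force one clique at some moment to hold its $t-2$ pinned robots together with many visitors from the other side.

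If this counting turns out to give only an additive constant, my fallback is to chain many bottlenecks. I would use $q+1$ cliques $K_t$ hung on a long path or a central cycle, each attached at a single vertex. Each bottleneck then forces at least one extra ``slack'' robot that cannot be shared, because robots cannot pass each other between bottlenecks. The goal there is $\mmd_c-\mmd\ge q$, with $\mmd$ still equal to $\dim+1$ via a single roaming robot.

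Finally, I would choose $t$ (or the number of cliques) as a function of $q$ so that the gap between the lower bound for $\mmd_c$ and the upper bound $\dim+1$ for $\mmd$ is at least $q$. Along the way I also need to confirm that $G_t$ is mobile-resolvable, so that $\mmd_c$ is defined. For that I would give an explicit schedule with enough robots: keep the pinned sets fixed while a block of spare robots shuttles across, then swap the roles of the pinned and spare robots inside each clique. This also supplies the matching upper bound if an exact value is wanted.
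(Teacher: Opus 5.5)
The barbell $G_t$ cannot give the theorem. In fact $\mmd_c(G_t)=\mmd(G_t)=2t-3$, so the gap is $0$.

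By your own observation, a configuration is resolving if and only if each of $A-a$ and $B-b$ contains at least $t-2$ robots. Start with $A-a$ full and $t-2$ robots in $B-b$, so $2(t-2)+1$ robots in total. Since $a$ is adjacent to every vertex of $A-a$, \emph{any} robot of the full clique may step to $a$ and walk along $Q$; every intermediate configuration is legal. It then drops into the unique hole of $B-b$, after which $B-b$ is full and any of its robots can make the return trip. Inside a twin class with a hole, robots can be permuted arbitrarily, so every robot visits every vertex.

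Hence the step you call the heart of the proof is false: $N$ need not exceed $2(t-2)$ by anything growing with $t$. The order-preservation on $Q$ costs nothing, because only one robot is ever in transit and meeting inside a clique is free. Your fallback, with $q+1$ cliques hung on a path or cycle, fails for the same reason. A single slack robot is passed from clique to clique like a token, so the slack \emph{is} shared. Fixed twin classes impose only the static condition ``at most one hole per class'', which one circulating extra robot always satisfies.

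The missing idea is a graph in which complete mobility forces robots to vacate certain vertices, and vacating them makes the resolving requirement blow up. The paper uses $G_k$ with $A=\mathbb{Z}_k$ and $B$ the set of all subsets of $A$. Both $A$ and $B$ induce cliques, and $i\in A$ is adjacent to $T\in B$ if and only if $i\in T$.

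Keeping $A$ fully occupied lets one extra robot roam, so $\mmd(G_k)=k+1$. But in a completely mobile configuration some robot must at some step move from $A$ to $B$. At that moment $A$ has an empty vertex, which by a lemma from the earlier mobile paper forces at least $2^{k-1}$ robots in $B$. So if there were fewer than $2^{k-1}+k-1$ robots, two vertices $a,b\in A$ would be empty.

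Then for each of the $2^{k-2}$ sets $T$ avoiding $a$ and $b$, the four vertices $T$, $T\cup\{a\}$, $T\cup\{b\}$, $T\cup\{a,b\}$ are equidistant from every robot. Hence three of them must be occupied, giving $|S|\ge 3\cdot 2^{k-2}\ge 2^{k-1}+k-1$, a contradiction. This yields $\mmd_c(G_k)-\mmd(G_k)\ge 2^{k-1}-2$.

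The essential mechanism is that the twin-like classes in $B$ are created dynamically by the empty vertices of $A$. A construction built from fixed twin classes, like yours, cannot reproduce this.
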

\begin{proof}
Let $k \geq 3$ be an integer. As in~\cite{camacho}, we define $G_k$ to be the graph of order $2^k + k$ with $V(G) = A \cup B$, where $A = \mathbb{Z}_k$ and $B = \{ S : S \subseteq A \}$. The edges of $G_k$ are defined as follows: (i) both subsets $A$ and $B$ induce a clique in $G_k$; and (ii) a vertex $i \in A$ is adjacent to every vertex $A_j\in B$ such that $i\in A_j$. It was shown in~\cite{camacho} that $\mmd(G_k) = k+1$.

We now show that $\mmd_c(G_k) = 2^{k-1} + k-1$. For $0 \leq a < b < k$, let $B_{a,b}$ be the set of subsets of $\{ 0, \dots, k-1 \}$ which contain, either both $a$ and $b$, or neither $a$ nor $b$ (so $|B_{a,b}| = 2^{k-1}$). If $T_1, T_2 \in B \setminus B_{a,b}$ are distinct sets, then either $T_1 \setminus \{ a,b \} \neq T_2 \setminus \{ a,b \}$ or $\{ a, b \} \subseteq (T_1 \setminus T_2) \cup  (T_2 \setminus T_1)$. Therefore, both $(A \setminus \{ a \}) \cup B_{a,b}$ and $(A \setminus \{ b \}) \cup B_{a,b}$ are resolving sets of $G_k$. We note also that any $S \subseteq V(G_k)$ with $A \subseteq S$ is a resolving set, and so, whenever there is a robot at each vertex of $A$, the robots in $B$ can move freely.

Begin by stationing robots at $(A \setminus \{ 0 \}) \cup B_{0,1}$. First we show that for any robot $R$ in the configuration there is a sequence of valid moves that brings $R$ to the position $\{ 0,1\} \in B$ such that the set of occupied vertices remains $(A \setminus \{ 0 \}) \cup B_{0,1}$. If $R$ is already the robot at $\{ 0,1\} $, then we are done. If $R$ is in $B \setminus \{ \{ 0,1\} \} $, then perform the move $\{ 0,1\} \move 0$ and then move the robots in $B$ to the configuration $B_{0,1} \setminus \{\{ 0,1,2 \}\}$, with $R$ occupying $\{ 0,1\} $. Then, send the first robot to $\{ 0,1,2 \} $ via the move $0 \move \{ 0,1,2\} $.

If $R$ is at the vertex $i \in A$, move the robot at $\{ 0,1\} $ to $0$ and then move the robots remaining in $B$ to the configuration $B_{0,i} \setminus \{ \{ 0,i\} \} $. Next, send the robot $R$ to $B$ via the move $i \move \{ 0,i\} $ and make the move $\{ 0,i,j\} \move i$, where $j \in A \setminus \{ 0,1\} $. Finally, move the robots in $B$ to the configuration $B_{0,1} \setminus \{ \{ 0,1,2\} \}$, with $R$ occupying $\{ 0,1\} $ and bring the first robot back to $B$ via the move $0 \move \{ 0,1,2\} $.

Now that we have shown that any robot can be made to occupy the vertex $\{ 0,1\} $ in $(A \setminus \{ 0 \}) \cup B_{0,1}$, it suffices to prove that the robot $R'$ at $\{ 0,1\} $ can visit any vertex of $G_k$ via a series of valid moves. Make the move $\{ 0,1,2\} \move 0$, then $R'$ can visit any vertex of $B$. Finally, to allow $R'$ to visit $j \in A$, move the remaining robots in $B$ to the configuration $B_{i,j_1} \setminus \{ \{ i,j_1,j_2\} \}$, where $i,j_1,j_2$ are distinct elements of $A$, so that $R'$ is stationed at $\{ i,j_1\} $, then make the two moves $i \move \{ i,j_1,j_2\}$ and $\{ i,j_1\} \move i$.





For the lower bound, let $S$ be a completely mobile resolving set of $G_k$, and suppose for a contradiction that $|S| < 2^{k-1} + k-1$. Since each robot must traverse each vertex in $A$, there is a step at which a robot moves from $A$ to $B$, and so we may assume that $|S \cap A| < k$. As reasoned in the proof of~\cite[Lemma 2.9(ii)]{camacho}, it follows that $|S \cap B| \geq 2^{k-1}$. Therefore, $|S \cap A| \leq k-2$.

Let $a,b \in A \setminus S$ be distinct vertices. For each $T \in B$ such that $T \cap \{ a,b \} = \emptyset$, the vertices $T$, $T \cup \{ a \}$, $T \cup \{ b \}$, and $T \cup \{ a,b \}$ are equidistant from every other vertex in $V(G_k) \setminus \{ a, b \}$. It follows that at least three of these four vertices must be occupied. There are $2^{k-2}$ such sets $T$. Therefore, $|S \cap B| \geq 3 \times 2^{k-2} \geq 2^{k-1} + k-1$ --- a contradiction.
\end{proof}

We now discuss graphs which have very small or large completely mobile metric dimension. It is trivial that the only graphs with completely mobile metric dimension one are $K_1$ and $K_2$, since the only graphs with metric dimension one are the paths $P_n$, which we have already established are not mobile-resolvable for $n \geq 3$. We need one lemma that gives a lower bound for the mobile metric dimension in terms of the maximum degree.

\begin{lemma}\label{lem:Delta lower bound}
    For any graph with order $n \geq 2$ and maximum degree $\Delta $, the completely mobile metric dimension satisfies \[ \mmd_c (G) \geq \mmd (G) \geq 1+\left \lceil \log _3(\Delta ) \right \rceil .\] If $G$ is bipartite, this can be strengthened to \[ \mmd_c (G) \geq \mmd (G) \geq 1+\left \lceil \log _2(\Delta ) \right \rceil .\]
\end{lemma}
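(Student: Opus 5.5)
The first inequality $\mmd_c(G)\ge \mmd(G)$ is already part of the chain \eqref{eq-completely-mobile-not-tot}, since every completely mobile resolving set is a mobile resolving set. So the work is to bound $\mmd(G)$ from below. The plan is to use the moment when a robot stands at a vertex of maximum degree. Let $S$ be a mobile resolving set with $|S|=\mmd(G)=k$, and let $v$ be a vertex with $\deg(v)=\Delta$. Some robot visits $v$ by valid moves, and every configuration along the way is a resolving set. Hence there is a resolving set $S'$ with $|S'|=k$ and $v\in S'$.

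Next, count the possible metric representations of the $\Delta$ neighbours $N(v)$ with respect to $S'$. Every neighbour has distance exactly $1$ to $v$, so the coordinate at $v$ gives no information among them. For any other $w\in S'\setminus\{v\}$ and any $u\in N(v)$, the triangle inequality gives $d(u,w)\in\{d(v,w)-1,\,d(v,w),\,d(v,w)+1\}$, so this coordinate takes at most three values on $N(v)$. The vectors $r(u|S')$ for $u\in N(v)$ must be pairwise distinct, since $S'$ is resolving. A neighbour $u$ may itself lie in $S'$, but then it is still identified by its vector, which is fine, so no special care is needed there. Therefore $\Delta\le 3^{k-1}$, which gives $k\ge 1+\lceil\log_3\Delta\rceil$ because $k$ is an integer.

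In the bipartite case, $d(u,w)$ and $d(v,w)$ have different parities for adjacent $u,v$, so $d(u,w)=d(v,w)$ is impossible. Each coordinate then takes at most two values on $N(v)$, giving $\Delta\le 2^{k-1}$ and $k\ge 1+\lceil\log_2\Delta\rceil$.

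The main point needing care is the first step. For a general resolving set the bound would only be $\Delta\le 3^{k}$, and the extra $+1$ comes precisely from mobility forcing a robot onto $v$. One small point to check is that $n\ge 2$ and connectivity guarantee $\Delta\ge 1$, so the logarithm is defined.
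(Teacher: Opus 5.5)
Your proof is correct and follows essentially the same route as the paper. Both arguments use mobility to guarantee a resolving configuration with a robot on a maximum-degree vertex $v$, observe that every other robot sees at most three distance values on $N(v)$ (at most two in the bipartite case, by parity), and conclude $\Delta \le 3^{k-1}$ (resp.\ $2^{k-1}$). The only difference is cosmetic: you count the possible distance vectors directly rather than using the paper's iterated pigeonhole, which also makes the passage to the ceiling slightly cleaner.
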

\begin{proof}
    Consider any mobile metric resolving set $S$ of $G$, and let $x$ be a vertex of $G$ of maximum degree. At some point there is a robot at $x$. Let $R$ be any robot at a vertex $y \neq x$ of $G$. If $t = d(y,N(x)) = \min \{ d(y,z): z \sim x\} $, then for any $z \in N(x)$ we have $d(y,z) \in \{ t,t+1,t+2\} $. Therefore by the pigeonhole principle at least $\frac{\Delta }{3}$ vertices of $N(x)$ have the same distance to robot $R$. By induction, if there are $k+1$ robots in $S$, then there is a subset $A$ of $\frac{\Delta }{3^k}$ neighbours of $x$ such that each robot is equidistant to every vertex of $A$. As $S$ is a resolving set, we require that $\frac{\Delta }{3^k} \leq 1$. It follows that there are at least $1+\log _3(\Delta )$ robots in $S$. If $G$ is bipartite, $R$ is a robot at $y \neq x$ and $t = d(y,N(x))$, then $y$ can be at distance $t$ or $t+2$ from any neighbour of $x$, and the same argument follows with $2^k$ in place of $3^k$.
\end{proof}

Characterising the graphs $G$ with $\mmd _c(G) = 2$ seems to be a hard problem. We have already seen that the cycles $C_n$, $n \geq 5$, are a family with completely mobile metric dimension two, and it follows from Lemma~\ref{lem:Delta lower bound} that any other graph $G$ with $\mmd _c(G) = 2$ must have maximum degree three. We now describe two such families.

\begin{theorem}
 If a graph $G$ is obtained by adding a chord to an odd cycle, then $\mmd _c(G) = 2$.
\end{theorem}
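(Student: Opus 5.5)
Write $G$ as the cycle $C_n$ on $\mathbb{Z}_n$ with $n$ odd, together with a chord $0c$ where $2\le c\le n-2$; since a chord exists, $n\ge 5$. The lower bound is immediate. $G$ is not a path, so $\dim(G)\ge 2$, and \eqref{eq-completely-mobile-not-tot} gives $\mmd_c(G)\ge 2$. All the work is in the upper bound. For it I would reuse the rotation strategy from the cycle case: start two robots on $\{0,1\}$ and apply the moves
\[
\{k,k+1\}\move\{k,k+2\}\move\{k+1,k+2\}
\]
for $k=0,1,\dots,n-2$. Each robot then travels once around the Hamiltonian cycle $C_n$ and visits every vertex, so it suffices to show that every set visited along this walk is a resolving set of $G$. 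Wherever that fails, I would replace the step by a short detour.

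\textbf{Step 1: consecutive pairs.} I would show that $\{k,k+1\}$ resolves $G$ for every $k$. Fix $k$ and consider vertices $u\neq v$ with $d(u,k)=d(v,k)=a$ and $d(u,k+1)=d(v,k+1)=b$. Since $k\sim k+1$, we have $|a-b|\le 1$. If $a\ne b$, then $u$ and $v$ both lie on one side of the ``bisector'', and I would trace shortest paths through the chord, or avoiding it, to show that $u=v$. The case $a=b$ is where parity matters. Here I would split into cases according to which of the two cycles cut out by the chord, $0,1,\dots,c,0$ or $c,c+1,\dots,n-1,0,c$, contains the edge $k(k+1)$. Since $n$ is odd, these two cycles have lengths $c+1$ and $n-c+1$ of opposite parity, and I would use this to show that at most one vertex can be equidistant from $k$ and $k+1$.

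\textbf{Step 2: pairs at cycle-distance two.} Next I would show that $\{k,k+2\}$ resolves $G$. The natural tool is that the middle vertex $k+1$ is adjacent to both robots, which pins down the distance profiles. Pairs with $\{k,k+2\}=\{c-1,c+1\}$ or $\{k,k+2\}=\{n-1,1\}$, meaning the two robots flank a chord endpoint, are the suspicious ones: there the chord endpoint's other neighbour, $c$ or $0$, may create twins.

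\textbf{Step 3: repairing failures.} For any $k$ where $\{k,k+2\}$ fails to resolve, I would avoid that intermediate configuration. Instead I would route the move through the chord or through the rest of the cycle. For example, when a robot sits at $0$, I would use a move $0\move c$, or I would move the robots in the opposite order, $\{k,k+1\}\move\{k-1,k+1\}$-style, around that spot. It is enough to connect $\{k,k+1\}$ to $\{k+1,k+2\}$ by some sequence of resolving pairs in which each robot's trajectory still covers the vertices it needs.

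I expect Steps 2 and 3 to be the main obstacle: determining exactly which distance-two pairs near the chord endpoints fail, especially in the small cases $c=2$ and $c=n-2$ where the chord forms a triangle, and checking that a valid detour exists in each such case. I would first compute the distance vectors explicitly for pairs within distance two of $0$ or $c$, keeping the parity of the two sub-cycles general, before attempting the general argument.
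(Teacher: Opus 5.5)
Your Step 1 is false, and it fails in the case $a\neq b$ that you set aside with the bisector remark. A chord endpoint has degree three. A robot on one of its neighbours sees the other two neighbours both at distance $2$, unless the chord closes a triangle.

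Concretely, take the chord $0c$. The pair $\{0,1\}$ gives $c$ and $n-1$ the same vector $(1,2)$ unless $c=2$. The pair $\{n-1,0\}$ gives $1$ and $c$ the same vector $(1,2)$ unless $c=n-2$. Already for $n=5$ with chord $03$ you get $r(3|\{0,1\})=r(4|\{0,1\})=(1,2)$. So the real obstruction sits in the consecutive pairs, not in the distance-two pairs of Step 2.

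It also cannot be removed by the local detours of Step 3. Take $n=11$ with chord $05$. When a robot stands on $0$, the partner must separate the neighbours $1,10,5$ of $0$. Checking each $y\in\{1,2,3,4,5,6,9,10\}$ shows that two of these neighbours always get the same distance vector, so only $y\in\{7,8\}$ can work. Every robot must stand on $0$ at some point. Hence no scheme in which the two robots travel around the cycle as a close pair (cycle-distance at most two, as in your rotation) can succeed. At that moment the partner has to sit roughly opposite the chord.

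The missing idea is global, and it is how the paper argues. First use your parity observation to relabel so that the chord is $0r$ with $r$ odd. Then $\sigma(i)=r-i$ is an automorphism of $G$ that swaps $0$ and $r$ and has the unique fixed point $m=\frac{n+r}{2}$. Two distinct vertices are equidistant from $m$ exactly when $\sigma$ swaps them. From this one checks that $\{w,m\}$ is resolving for every $w\neq m$, and that $\{m-1,m+1\}$ is resolving.

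The strategy is then to park one robot at $m$ while the other roams over all of $G$. To exchange roles, move the roaming robot to $m-1$, the parked robot to $m+1$, and the first robot to $m$. In the example above, $r=5$ and $m=8$, which matches the finding that only $7$ or $8$ can accompany a robot on $0$. Your lower bound is fine as written.
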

\begin{proof}
    As before, label the vertices of the cycle with $\mathbb{Z}_n$, where $n$ is odd. Relabeling the vertices of $C_n$ if necessary, we may assume that the added chord is the edge $0r$, where $r$ is odd. Set $m = \frac{n+r}{2}$. Define the map $\sigma (i) = r-i$ for all $i \in \mathbb{Z}_n$. The map $\sigma $ is an automorphism of $G$ that swaps $0$ and $r$, and has $m$ as unique fixed point. Observe that $d(i,m) = d(j,m)$ if and only if $j = \sigma (i)$. Using this fact, it is easily verified that $\{ w,m\} $ is a resolving set for any $w \neq m$, and that $\{ m-1,m+1\} $ is also a resolving set. Start with a robot $R_1$ at $m$ and a second robot $R_2$ can move freely through the remaining vertices. The two robots can also swap positions by moving $R_2$ to $m-1$, then moving $R_1$ to $m+1$, and then $R_2$ to $m$. At this point $R_2$ has visited every vertex, and now $R_1$ can now visit the remaining vertices while $R_2$ is stationed at $m$.
\end{proof}

\begin{theorem}
If $G$ is a unicyclic graph formed by adding at most one path to each vertex of an odd cycle, then $\mmd _c(G) = 2$.
\end{theorem}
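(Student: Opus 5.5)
Label the cycle $C_n$ with $\mathbb{Z}_n$, $n$ odd. At each vertex $i$ carrying a pendant path, write $P_i$ for that path (excluding $i$) and $\ell_i$ for its far end. Since $G$ is not a path, $\dim(G)\ge 2$, so by \eqref{eq-completely-mobile-not-tot} it suffices to exhibit two robots that form a completely mobile resolving set.

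\textbf{Step 1: which pairs are resolving.} I would use the following resolving criterion. For $x,y$ on the cycle, write $\pi(v)$ for the cycle vertex nearest to $v$ and $h(v)=d(v,\pi(v))$. Then $d(v,x)=h(v)+d_{C}(\pi(v),x)$. Because $n$ is odd, every cycle vertex $c$ has a unique antipodal edge, and the only cycle vertex equidistant to $c\pm j$ is $c$ itself. Two cycle vertices $x\ne y$ resolve all pairs lying on the cycle. For a general pair $u,v$, the equalities $d(u,x)=d(v,x)$ and $d(u,y)=d(v,y)$ give a linear system in the unknowns $h$ and cycle position. I expect this to show that $\{x,y\}$ fails only in special configurations, for instance when $x,y$ are far apart and the unresolved pair consists of vertices on pendant paths hanging at points symmetric with respect to the reflection of the cycle that swaps $x$ and $y$. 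The clean statement I would aim for is the following. Two \emph{adjacent} cycle vertices $\{i,i+1\}$ always resolve $G$. Also, $\{v,i\}$ resolves $G$ whenever $v\in P_i\cup\{i\}$ and $i+1$ or $i-1$ is added appropriately. More usefully, a vertex $v$ on $P_i$ together with the cycle vertex $i+1$ is resolving. The reason is that distances to $v$ equal distances to $i$ plus a constant on everything outside $P_i$, and $P_i$ itself is distinguished by distances along the path; since $P_i$ is a single path, at most one pendant path hangs at each vertex, so no ambiguity arises there. The hypothesis ``at most one path per vertex'' is exactly what makes this work, because two paths at the same vertex would be twins under any robot outside them.

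\textbf{Step 2: the motion.} I would mimic the cycle argument. Keep the robots on adjacent cycle vertices $\{k,k+1\}$, which are resolving by Step 1. To let the robot at $k$ explore $P_k$, send it down the path to $\ell_k$ and back while the other robot sits at $k+1$; each configuration $\{v,k+1\}$ with $v\in P_k$ is resolving. Then perform the rotation $\{k,k+1\}\move\{k,k+2\}\move\{k+1,k+2\}$. This requires checking that $\{k,k+2\}$ is resolving. I believe this holds: the odd cycle has no vertex equidistant from both $k$ and $k+2$ other than $k+1$, so problems could only occur at depth, and there the shared projection offsets cancel. Iterating the rotation lets both robots traverse the cycle in the same cyclic order. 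Each robot, whenever it is the trailing robot at some $k$, explores $P_k$, so every robot visits every vertex.

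\textbf{Main obstacle.} The hardest step is verifying that $\{k,k+2\}$ and $\{v,k+1\}$ with $v\in P_k$ are resolving in full generality. Pairs $u,v$ in different pendant paths with the same depth and symmetric feet could cause trouble. My first attempt would reduce the problem to the cycle: two vertices with equal distance vectors must have feet $\pi(u),\pi(v)$ satisfying $d_C(\pi(u),x)-d_C(\pi(v),x)=d_C(\pi(u),y)-d_C(\pi(v),y)=h(v)-h(u)$. On an odd cycle with $x,y$ at distance at most $2$, this forces $\pi(u)=\pi(v)$, and hence $u=v$ by the one-path hypothesis. If the $\{k,k+2\}$ step fails near the antipode, I would fall back on the swap trick from the chord theorem: fix one robot at a cycle vertex and let the other circulate via adjacent pairs.
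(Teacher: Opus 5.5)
There is a genuine gap: the configurations your motion relies on are not resolving once $n\ge 5$ and pendant paths are present. Your reduction in the ``main obstacle'' paragraph is the right one. If $u,v$ have feet $c_u,c_v$ and heights $h_u,h_v$, then $r(u|\{x,y\})=r(v|\{x,y\})$ exactly when $\varphi(c_u)=\varphi(c_v)$ and $h_v-h_u=d_C(c_u,x)-d_C(c_v,x)$, where $\varphi(c)=d_C(c,x)-d_C(c,y)$. So what you need is that $\varphi$ be injective on $\mathbb{Z}_n$; equidistance from $x$ and $y$, which is what you checked, is the wrong condition.

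For adjacent $x=0,y=1$ on $C_{2k+1}$, however, $\varphi\equiv 1$ on $\{1,\dots,k\}$ and $\varphi\equiv -1$ on $\{k+2,\dots,2k,0\}$. Hence the first vertex of a path hanging at any $c\in\{1,\dots,k-1\}$ has the same representation as $c+1$, and symmetrically on the other arc. Concretely, on $C_5$ with one pendant vertex $a$ at $0$, $r(a|\{0,1\})=(1,2)=r(4|\{0,1\})$. So the very configuration from which your trailing robot is meant to explore $P_0$ fails. Likewise, on $C_7$ the pair $\{0,2\}$ fails if there is a pendant vertex $a$ at $0$ ($a$ and $6$ both get $(1,3)$). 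Also $\{v,1\}$ with $v\in P_0$ fails if there is a path at $6$ (its first vertex and $5$ both get $(d(v,0)+2,3)$). Thus your claim that $d_C(x,y)\le 2$ forces $\pi(u)=\pi(v)$ is false. Your fallback of fixing one robot while the other circulates through neighbouring positions runs into the same bad configurations.

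The missing idea is to keep the two robots antipodal on the cycle, which is what the paper does. For $x=0$, $y=k$ one has $\varphi(c)=2c-k$ for $0\le c\le k$ and $\varphi(c)=3k+1-2c$ for $k+1\le c\le 2k$. Each piece is strictly monotone and the two pieces take values of opposite parity, so $\varphi$ is injective. Equal representations then force equal feet, then equal heights, and hence $u=v$ by the one-path hypothesis.

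Now let the robot at $0$ descend to a vertex $v$ at depth $s$ in its own path. Distances to $v$ from outside that path just shift by $s$, so injectivity is preserved there. Moreover, $d(\cdot,y)-d(\cdot,v)$ is at least $k-s$ on the path at $0$ and at most $k-1-s$ elsewhere, so $\{v,y\}$ is still resolving; this is the paper's check of $\{u_{0s},u_k\}$.

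Finally, the rotation $\{r,r+k\}\move\{r,r+k+1\}\move\{r+1,r+k+1\}$ preserves antipodality, because $r+k+1\equiv r-k$. With adjacent pairs replaced by antipodal ones, your Step 2 goes through essentially verbatim.
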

\begin{proof}
    Let the vertex set of the odd cycle be $u_i$, $i \in \mathbb{Z}_n$, where $n = 2k+1$ for some integer $k \geq 1$. Suppose that for $i \in \mathbb{Z}_n$ the vertex $u_i$ has a path of length $\ell (i) \geq 0$ attached to it. Then we can denote the vertices by $u_{ij}$, where $0 \leq j \leq \ell (i)$ and $u_{i0} = u_i$. We claim that the set $S = \{ u_{rs},u'\} $ is a resolving set, where $0 \leq s \leq \ell (r)$ and  $u'$ is a vertex antipodal to $u_{r0}$ on the cycle, i.e. $u'$ is one of $u_{r+k},u_{r-k}$. Without loss of generality, we take $S = \{ u_{0s},u_k\} $, $0 \leq s \leq \ell (0)$.

    Firstly, it is easily verified that the vertices $u_{0j}$ are uniquely identified by $S$. For $0 \leq j \leq s$ there are just two vertices at distance $j$ from $u_{0j}$, which have different distances to $u_k$. If $j > 2s$, then the distance of $u_{0j}$ is $j-s$, and any other vertex of $G$ at distance $j-s$ from $u_{0s}$ is at distance $j-2s$ from $u_0$, and so is at distance at most $j-2s-1$ from $u_{1}$ or $u_{n-1}$. In either case, such a vertex is at distance at most $j-2s-1+k < j+k = d(u_{0j},u_{k})$ from $u_k$. Therefore, suppose that there is a pair of vertices $u_{i_1j_1},u_{i_2,j_2}$, $i_1,i_2 \neq 0$, that is not resolved by $\{ u_{0s},u_{k}\} $. Suppose that $i_1,i_2 \in [1,k] $; then the distance vectors to $S$ are $(s+i_1+j_1,k-i_1+j_1)$ and $(s+i_2+j_2,k-i_2+j_2)$. Setting these vectors to be equal and solving the resulting equations shows that $i_1 = i_2$ and $j_1 = j_2$. Similarly if $i_1,i_2 \in [k,n-1]$, the distance vectors are $(j_1+n-i_1+s,j_1+i_1-k)$ and $(j_2+n-i_2+s,j_2+i_2-k)$ and we again conclude that $i_1 = i_2,j_1 = j_2$. Finally suppose that $i_1 \in [1,k], i_2 \in [k+1,n-1]$. Then the two distance vectors are $(s+i_1+j_1,k-i_1+j_1)$ and $(j_2+n-i_2+s,j_2+i_2-k)$. Hence we can derive the equation $2j_1+k=n+2j-k$, which is a contradiction, since $n$ is odd.

    The two robots can traverse the cycle whilst always maintaining antipodal positions on the cycle, as in our argument at the start of this section, and when a robot encounters a path attached to a vertex it can explore it and return to the cycle. In this manner, both robots visit every vertex of $G$ and by the above they always form a resolving set.
\end{proof}

For vertices $u,v$ of a graph $G$ we write $u \sim _N v$ if $u$ and $v$ are twins, that is, $N(u) \setminus \{ u\} = N(v) \setminus \{ v\}$. It is known that $\sim _N$ is an equivalence relation on $V(G)$ and each equivalence class $[u]_N$ is either an independent set or induces a clique~\cite{Hernando}. We call each equivalence class a \emph{twin class} and denote the number of twin classes by $\twin (G)$. Notice also that if $[u]_N \neq [v]_N$, then between $[u]_N$ and $[v]_N$ there are either no edges in $G$ or all possible edges are present.

\begin{lemma}\label{lem:twins}
  If $[u]_N$ is a twin class of $G$, then any mobile resolving set must always have at least $|[u]_N|-1$ robots on $[u]_N$. Hence $\mmd _c(G) \geq n-\twin (G)$.
\end{lemma}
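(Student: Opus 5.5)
The plan is to prove the claim for every resolving set, not just mobile ones: any resolving set contains at least $|[u]_N|-1$ vertices of each twin class. A mobile resolving set is a resolving set at every stage of its sequence of valid moves, so the "always" part then follows at once.

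The key observation is standard. If $x,y$ are distinct twins, i.e.\ $N(x)\setminus\{x\}=N(y)\setminus\{y\}$, then $d(x,w)=d(y,w)$ for every $w\notin\{x,y\}$. To see this, note that a shortest path from $x$ to $w$ has length at least $1$ and leaves $x$ through a neighbour $z$ of $x$. If $z=y$, the path reaches $w$ via some neighbour of $y$ other than $x$, and that vertex is also adjacent to $x$, giving a shorter path. So we may take $z\neq y$. Then $z\in N(y)$, so $d(y,w)\le d(x,w)$, and symmetry gives equality. Consequently $R\{x,y\}=\{x,y\}$, and a set $S$ resolves the pair $x,y$ only if $S\cap\{x,y\}\neq\emptyset$.

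Now suppose that at some moment two vertices $x,y$ of $[u]_N$ are both unoccupied. The occupied set then fails to resolve $x,y$, so it is not a resolving set. This contradicts the fact that the initial set and every configuration reached by valid moves are resolving sets. Hence at every moment at most one vertex of each twin class is unoccupied, so at least $|[u]_N|-1$ robots lie on $[u]_N$.

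For the bound, I apply this to a completely mobile resolving set $S$, which is in particular a mobile resolving set, at its initial configuration. The twin classes partition $V(G)$, so summing over the $\twin(G)$ classes gives
\[ |S|\ge \sum_{[u]_N}\left(|[u]_N|-1\right)=n-\twin(G), \]
and so $\mmd_c(G)\ge n-\twin(G)$.

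The only point needing care is the twin-distance fact, specifically the case where a shortest path from $x$ passes through $y$; this occurs when the class induces a clique. The replacement argument above handles both clique and independent twin classes uniformly, so I do not expect a real obstacle.
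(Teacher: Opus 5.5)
Your proof is correct and follows the paper's own one-line argument: two unoccupied vertices of a twin class are equidistant from every robot, so the occupied set would fail to resolve them. You add the details the paper leaves implicit, namely the check that twins are equidistant from all other vertices (including the adjacent-twin case) and the summation over the $\twin(G)$ twin classes that gives $\mmd_c(G)\ge n-\twin(G)$.
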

\begin{proof}
    If there are two unoccupied vertices $v,v' \in [u]_N$, these have the same distance to each of the robots, and hence would not be resolved.
\end{proof}

\begin{theorem}\label{thm:complete multipartite}
    For $r \geq 2$, $n_1 \geq n_2 \geq \ldots \geq n_r \geq 1$, let $t$ be the number of parts of size $1$ and $s$ be the number of parts of size at least $ 2$ in the complete multipartite graph $K_{n_1,n_2,\ldots ,n_r}$. If $s$ and $t$ are not both $1$, then
\[
\mmd_c(K_{n_1,n_2,\ldots ,n_r}) =
\begin{cases}
n-s+1; & \text{ if } t = 0,\\
n-s; & \text{ if } s,t > 0,\\
n-1; & \text{ if } s = 0.
\end{cases}
\]
\end{theorem}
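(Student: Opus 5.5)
The plan is to first describe the resolving sets of $G=K_{n_1,\ldots,n_r}$ exactly, and then treat the upper and lower bounds separately. $G$ has diameter at most $2$. So a set $S$ is resolving if and only if no two unoccupied vertices $u,v$ have the same neighbourhood inside $S$.

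The twin classes are as follows. Each part of size at least $2$ is an independent twin class. If $t>0$, all the singleton parts together form one further twin class, which is a clique. Hence $\twin(G)=s$ if $t=0$ and $\twin(G)=s+1$ if $t>0$.

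I would then check that two unoccupied vertices lying in different twin classes are always resolved, provided every class has at most one hole.
\begin{itemize}
\item Let $u$ lie in a part $P$ with $|P|\ge 2$. Then $N(u)\cap S=S\setminus P$.
\item Let $v$ be a singleton. Then $N(v)\cap S=S$.
\item For $u\in P$ and $v\in Q$ with $P\ne Q$, the sets $S\setminus P$ and $S\setminus Q$ coincide only if $S\cap P=S\cap Q=\emptyset$. This is impossible, since a part of size at least $2$ with at most one hole contains a robot.
\item Likewise $S\setminus P=S$ forces $S\cap P=\emptyset$, which is again impossible.
\end{itemize}
Combined with Lemma~\ref{lem:twins}, this gives the key fact: $S$ is resolving if and only if every twin class contains at most one unoccupied vertex. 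The case $s=0$ is $K_n$, and $\mmd_c(K_n)=n-1$ was noted earlier.

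For the lower bounds, Lemma~\ref{lem:twins} gives $\mmd_c(G)\ge n-\twin(G)$, and I would improve this by one. Suppose exactly $n-\twin(G)$ robots are used. Then every twin class has exactly one hole. Any move either goes inside a class or goes to the hole of another class.
\begin{itemize}
\item A move inside a class is only possible in the clique of singletons. There it keeps the configuration the same up to which vertex is the hole.
\item A move from class $C$ to the hole of class $D\neq C$ leaves two holes in $C$. This is illegal.
\end{itemize}
So no robot can ever leave its twin class, and since $\twin(G)\ge 2$ such a configuration is not completely mobile. This yields $n-s+1$ when $t=0$ and $n-s$ when $s,t>0$.

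For the upper bound I use $n-\twin(G)+1$ robots. By the key fact, the legal configurations are exactly those in which one class is full and every other class has exactly one hole. A legal move takes a robot from the full class $C$ into the hole of another class $D$; afterwards $D$ is full and the vacated vertex is the new hole of $C$. Inside the singleton clique, a robot may also slide into the hole when that class is not full.

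To show that a given robot $R$ can reach a prescribed vertex $y$ in a class $D$, I would use the following maneuver.
\begin{enumerate}
\item Bring $R$ into a class $C\neq D$, and make $C$ the full class.
\item Then $D$ has a single hole, and I need that hole to be at $y$. If the robot $R'$ at $y$ is not already absent, I first make $D$ the full class by feeding it from another class. I then move $R'$ out of $D$ into the hole of the class containing $R$. This makes that class full again, with $R$ still inside it, and leaves $y$ as the hole of $D$.
\item Finally I move $R$ directly to $y$.
\end{enumerate}
A similar short sequence lets $R$ leave its current class whenever that class is full: first fill its class from some other class, then move $R$ out. I would spell this out first for two classes, such as $P,Q$ in $K_{a,b}$, where the alternation $P$ full, $Q$ full, $P$ full, and so on already realises it. Moves inside the singleton clique only add flexibility.

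The main obstacle is making this routing argument airtight in every case. In particular, I must check that the "make $D$ full, then push $R'$ into $R$'s class" step never requires $R$ to move at the wrong time. I also expect the excluded case $s=t=1$ to be exactly where this scheme breaks down, and identifying where it fails is a useful check on the argument.
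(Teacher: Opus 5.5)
Your proposal is correct and follows the paper's proof essentially step for step. The ingredients match: the lower bound from Lemma~\ref{lem:twins}, the extra $+1$ from the observation that with exactly one hole per twin class no robot can ever leave its class, and an upper-bound configuration with one full class. Your routing is the same swap the paper uses: $u'\move v'$, then $v\move u'$, then $u\move v$, where $u'$ is a second robot in $R$'s full class.

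The step you flag is closed by always choosing the intermediate class $C\neq D$ with $|C|\ge 2$. Then the robot fed into $D$ is not $R$. If $R$ starts in $D$, or in the lone singleton class when $t=1$, first fill that class and send $R$ into the hole of such a $C$. This is the only place the hypothesis enters: given $s\ge 1$, every class $D$ has such a $C$ exactly when $(s,t)\neq(1,1)$.
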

\begin{proof}
    Denote the parts of the partition by $V_1,V_2,\ldots ,V_r$, where $|V_i| = n_i$ for $1 \leq i \leq r$. The case $s = 0$ is just the complete graph $K_n$, so we can assume that $s > 0$. Let $S$ be any completely mobile resolving set of the graph. Let $W$ be the union of the parts of size $1$. By Lemma~\ref{lem:twins}, each of the $s$ parts of size at least $ 2$ has at most one unoccupied vertex, and if $t > 0$, then there is at most one unoccupied vertex in $W$. Thus, $|S| \geq n-s$ if $t = 0$; and $|S| \geq n-s-1$ if $t > 0$.

    First, suppose that $t = 0$. If there are $n-s$ robots in $S$, then any valid move would leave two unoccupied vertices in some part, contradicting Lemma~\ref{lem:twins}, so there must be $n-s+1$ robots. We now show that there is a completely mobile resolving set with $n-s+1$ robots. We start with one unoccupied vertex in each part, except for one part (call it the \emph{full part}), which has a robot at each vertex. It is easy to see that any such configuration is a resolving set. Now suppose that we wish to move a robot from a vertex $u \in V_i$ to a vertex $v \in V_j$. If there is not a robot at every vertex of $V_i$, move a robot from the full part to $V_i$. Assume firstly that $i \neq j$. Now if $v$ is unoccupied, we can make the valid move $u \move v$ straight away. Otherwise, if $v'$ is the unoccupied vertex of $V_j$ and $u' \in V_i\setminus \{ u\} $, make the series of moves $u' \move v'$, $v \move u'$ and $u \move v$. Now, if $i = j$, that is, we want to move a robot from $u$ in $V_i$ to another vertex $v$ of $V_i$, again make $V_i$ the full part. Let $V'$ be any other part of the partition and $v',v''$ be distinct vertices of $V'$, where $v'$ is unoccupied. Make the moves $u \move v'$, $v'' \move u$, $v \move v''$ and finally $v' \move v$, which places the robot at the desired vertex $v$.

    If $s,t > 0$ and there is an unoccupied vertex in each of the parts of size at least $ 2$ and an unoccupied vertex in $W$, then no robot from a part of size at least $2$ can make a valid move and the robots in $W$ cannot leave $W$. Hence, such a configuration is not completely mobile. However, if $s$ and $t$ are not both $1$ and we start with a robot at each vertex of $W$ and one unoccupied vertex in each part of size at least $2$, then an argument similar to the preceding case shows that this configuration is a completely mobile resolving set.
\end{proof}

\begin{corollary}
    The difference $\res (G)-\mmd _c(G)$ can be arbitrarily large for 2-connected graphs.
\end{corollary}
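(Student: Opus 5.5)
We will exhibit an explicit family of 2-connected complete multipartite graphs and compare the value of $\mmd_c$ from Theorem~\ref{thm:complete multipartite} with the resolving number. The natural choice is the complete bipartite graph $K_{m,m}$ with $m \geq 2$. It is 2-connected, and it has $s = 2$ parts of size at least two and $t = 0$ parts of size one. Since $s$ and $t$ are not both $1$, Theorem~\ref{thm:complete multipartite} applies and gives $\mmd_c(K_{m,m}) = n - s + 1 = 2m - 1$.

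This is too large to separate from $\res(K_{m,m}) \le n-1 = 2m-1$, so we should instead use many parts. Take $K_{2,2,\ldots,2}$ with $r$ parts (the cocktail party graph), which is 2-connected for $r \geq 2$. Here $s = r$ and $t = 0$, so Theorem~\ref{thm:complete multipartite} yields
\[
\mmd_c(K_{2,\ldots,2}) = 2r - r + 1 = r + 1.
\]

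Next we bound $\res$ from below. Any set that misses both vertices of some part $V_i$ fails to resolve them, because the two vertices of $V_i$ are twins. Hence the set consisting of one full part together with one vertex from each of the remaining $r-1$ parts is not a problem, but the set $V(G) \setminus V_i$ is a non-resolving set of size $2r-2$. Therefore $\res(G) \geq 2r-1$. Combined with $\res(G) \le n-1 = 2r-1$, this gives $\res(G) = 2r-1$. The difference is then
\[
\res(G) - \mmd_c(G) = (2r-1) - (r+1) = r - 2,
\]
which is unbounded as $r \to \infty$.

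The main thing to check is that the hypotheses of Theorem~\ref{thm:complete multipartite} hold, which they do since $s = r \geq 2$ and $t = 0$. The twin argument used for $\res$ is exactly Lemma~\ref{lem:twins}. Any complete multipartite graph with many parts of size at least two would work similarly; the cocktail party graph simply makes the arithmetic cleanest.
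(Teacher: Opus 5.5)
Your proof is correct and is essentially the paper's own: both take the complete multipartite graph with $r$ parts of size two (the paper writes $k+2$), read off $\mmd_c = r+1$ from Theorem~\ref{thm:complete multipartite}, and use $\res(G) = n-1 = 2r-1$ to get a difference of $r-2$. The only difference is that you justify $\res(G)=2r-1$ directly, via the unresolved twin pair $V_i$ left outside $V(G)\setminus V_i$, whereas the paper cites the known value $n-1$ for complete multipartite graphs; the opening detour through $K_{m,m}$ can simply be deleted.
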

\begin{proof}
    It is shown in~\cite{resolving} that the resolving number of a complete multipartite graph with order $n$ is $n-1$. Therefore if $G$ is the complete multipartite graph with $k+2$ parts of size two, then by Theorem~\ref{thm:complete multipartite} we have $\res (G) -\mmd _c(G) = k$.
\end{proof}

We end this section by characterising graphs with order $n$ and completely mobile metric dimension $n-1$.

\begin{theorem}
  Let $G$ be a graph of order $n>1$. Then, $\mmd_c(G) = n-1$ if and only if $G$ is a join of two graphs which are both either a clique or a null graph.
\end{theorem}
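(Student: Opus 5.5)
The plan rests on one observation. A set of the form $V(G)\setminus\{x,y\}$ is a resolving set if and only if $x$ and $y$ are not twins. Indeed, every pair other than $\{x,y\}$ contains an occupied vertex, which resolves it. The pair $\{x,y\}$ is resolved by some $z\notin\{x,y\}$ exactly when $x\not\sim_N y$: for $z\neq x,y$ we have $d(z,x)=d(z,y)$ for all such $z$ if and only if $N(x)\setminus\{y\}=N(y)\setminus\{x\}$. Similarly, every set of $n-1$ vertices is resolving. So with $n-1$ robots every move into the single hole is valid. With $n-2$ robots, the legal configurations are exactly the pairs of non-twin holes, and a move slides one hole along an edge onto an occupied vertex. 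The condition $\mmd_c(G)=n-1$ therefore says that no such two-hole configuration is completely mobile, while some one-hole configuration is. I would also first record that a connected graph with $\twin(G)\le 2$ is exactly a join of two graphs each a clique or a null graph (or $K_n$). This holds because two distinct twin classes are either completely joined or have no edges between them, and connectedness forces the join.

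\emph{Sufficiency.} Suppose $G$ is such a join, so $\twin(G)\le 2$. Throughout I read the statement for mobile-resolvable $G$, since e.g.\ $P_3=K_1\vee \overline{K_2}$ and stars are not mobile-resolvable. If $\twin(G)=1$, then $G=K_n$ and the value $n-1$ is already known. If the twin classes are $A$ and $B$, take $n-2$ robots. By Lemma~\ref{lem:twins} each class contains at most one hole, so there is exactly one hole in $A$ and one in $B$. Any move transfers a robot from one class to the hole in the other, creating two holes in one class, which is forbidden by Lemma~\ref{lem:twins}. Hence no robot can ever move, so the configuration is not completely mobile. Fewer robots are excluded by Lemma~\ref{lem:twins} directly. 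Together with \eqref{eq-completely-mobile-not-tot}, this gives $\mmd_c(G)=n-1$.

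\emph{Necessity.} I would prove the contrapositive: if $\twin(G)\ge 3$, then some configuration of $n-2$ robots is completely mobile. I think of the two holes as tokens that walk on $G$ subject to never being twins, while the robots get permuted. The steps, in order, are as follows.
\begin{enumerate}
\item Show that the hole-pair configuration graph is connected. To slide hole $h_1$ along a path while $h_2$ stays put, the only obstruction is stepping onto a twin $w$ of $h_2$. Before that step, I would move $h_2$ to a vertex of a third twin class, which exists since $\twin(G)\ge 3$, reachable through a shortest path avoiding the current position of $h_1$.
\item Show that any robot can be brought to any vertex $v$. Move the holes so that one sits at $v$ and the other sits next to the robot's position, or on a path from the robot to $v$. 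Then slide the robot along that path, repairing twin conflicts exactly as in the first step.
\end{enumerate}

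The main obstacle is the second step, and specifically the case where the current robot sits on a cut vertex. There holes cannot overtake the robot, which is the same phenomenon that kills paths. Here I would use the two holes like the proof of Lemma~\ref{resolving bound}. One hole is parked ahead of the robot, and the other ``circulates'' through a third twin class adjacent to the relevant block. I expect to need a case split according to whether $G$ is 2-connected, where that lemma's argument adapts almost verbatim, or separable. In the separable case the existence of three twin classes must be leveraged to find such a bypass. If this fails for some separable graphs, I would have to refine the characterisation.
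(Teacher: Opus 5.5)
\paragraph{The gap: the separable case.} Your necessity plan targets a statement that is false for separable graphs, even mobile-resolvable ones. That statement is that $\twin(G)\ge 3$ always yields a completely mobile configuration of $n-2$ robots. Take the double star with adjacent centres $a,b$, leaves $a_1,a_2$ at $a$, and leaves $b_1,b_2$ at $b$. Here $\twin(G)=4$, and $\mmd_c(G)=3$ by Theorem~\ref{thm:mmdt-trees}. Now use four robots. Once a robot $R$ makes the move $a_1\move a$, the hole at $a_1$ can only ever be refilled by $R$, because the robot at $a_2$ entering $a$ would create the twin holes $a_1,a_2$. So $R$ reaches $b$ only when the second hole is there. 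After that move both holes, $a_1$ and $a$, can only be refilled by $R$, so $R$ never reaches $b_1$ or $b_2$. By symmetry no robot visits both $a_1$ and $b_1$.

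Hence the ``bypass through a third twin class'' does not exist in general, and the characterisation does not need refining. The missing idea is the other half of your own reformulation: $\mmd_c(G)=n-1$ also needs some one-hole configuration to be completely mobile, and for separable $G$ this fails outright. Suppose a robot stands on a cut vertex $c$, having arrived from one component of $G-c$. Then the unique hole lies behind it in that component, and the hole can only reach another component through $c$. So that robot never enters another component, and no $(n-1)$-set is completely mobile. Therefore $\mmd_c(G)\neq n-1$ for every separable $G$. This is how the paper removes separable graphs, leaving only the 2-connected case, where an $(n-2)$-configuration is actually needed.

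\paragraph{The 2-connected case.} Your outline follows the paper's spirit, but Lemma~\ref{resolving bound} does not carry over ``almost verbatim''. With two holes, a move is invalid whenever it leaves two twins unoccupied. Your step~2 does not handle moving a robot from $u$ to a twin $v$ in an independent twin class: $u$ must be refilled before $v$ can be vacated. The paper does this as follows. It first fills $[u]_N$. It then picks $w_1,w_2\in N(u)\setminus[u]_N$ that are either twins of each other or singleton classes; their existence uses 2-connectivity and non-completeness. It makes $w_1$ unoccupied and $w_2$ occupied. Finally it performs $u\move w_1$, $w_2\move u$, $v\move w_2$, $w_1\move v$. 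Clique twin classes and moves between distinct classes are handled separately.

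\paragraph{Sufficiency.} Your sufficiency argument is the paper's, via Lemma~\ref{lem:twins}, and your caveat that stars and $P_3$ must be excluded is correct. One small correction: if one twin class is a clique of size at least two, robots \emph{can} move inside it. The right conclusion is that no robot ever leaves its twin class, not that no robot can move.
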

\begin{proof}
 Firstly, by Theorem~\ref{thm:complete multipartite}, if $G = A \vee B$, where $A$ and $B$ are both either a clique or a null graph, then $\mmd_c(G) = n-1$.

 Now let $G$ be a graph of order $n$ which is not a join of two graphs which are both either a clique or a null graph (and so $\twin (G) \geq 3)$. Suppose for a contradiction that $G$ contains a cut vertex $v$ and that $\mmd_c(G) = n-1$. Then there must be a step when the set of occupied vertices is $V(G) \setminus \{ v \}$.  However, observe that in this configuration no robot is able to move to another component of $G-v$, and so, the robots are not completely mobile, a contradiction. Hence, we now assume that $G$ is 2-connected.

  We now show that $\mmd_c(G) \leq n-2$. Begin by stationing $n-2$ robots in $G$ so that the two unoccupied vertices lie in distinct twin classes. If the unoccupied vertices are $u,v$, then $u$ and $v$ are not twins, so there is an occupied vertex $w$ that is adjacent to just one of $u,v$, and hence resolves the pair. Therefore the configuration is a resolving set of $G$.

  Let $u, v \in V(G)$ be such that the robot $R$ is stationed at $u$. We show how to move $R$ to $v$ whilst always ensuring that the unoccupied vertices lie in distinct twin classes, completing the proof.

  First, suppose $[u]_N = [v]_N$ and $[u]_N$ induces a clique. If there is an unoccupied vertex $w \in [u]_N$, then move $v \move w$ (or do nothing if $v = w$) and then $u \move v$. Otherwise, consider a shortest path $v = v_0, v_1, \dots, v_k = w$ between $v$ and the set of two unoccupied vertices. Notice that such a path intersects any equivalence class at most once and for $0 \leq i < k$ there is no unoccupied vertex in $[v_i]_N$. By moving $v_{i} \move v_{i+1}$ for $i=k-1, \dots, 0$ in turn, $v$ becomes unoccupied, and the two unoccupied vertices are in distinct twin classes throughout. We may then move $u \move v$.

  Next, suppose that $[u]_N = [v]_N$ and $[u]_N$ is an independent set. First, if $[u]_N$ has an unoccupied vertex $w$, then we can move a robot along a shortest path from $w$ to a fully occupied twin class to ensure that $[u]_N$ is fully occupied. Suppose there exist $w_1, w_2 \in N(u) \setminus [u]_N$ with $[w_1]_N = [w_2]_N$. If possible, choose $w_1$ to be unoccupied. If not, since $G$ is 2-connected we may move the robots along any shortest path not through $v$ between $w_1$ and an unoccupied vertex so that $w_1$ becomes unoccupied. Then, move $u \move w_1$, $w_2 \move u$, $v \move w_2$, and $w_1 \move v$. Since $G$ is 2-connected and not a clique, if there are no such $w_1, w_2$, then there must exist $w_1, w_2 \in N(u) \setminus [u]_N$ with $|[w_1]_N| = |[w_2]_N| = 1$. By our choice of $w_1$ and $w_2$ and performing similar moves to before, we may ensure that $w_1$ is unoccupied and $w_2$ is occupied. Then the sequence $u \move w_1$, $w_2 \move u$, $v \move w_2$, $w_1 \move v$ moves $R$ to $v$.

  Finally, suppose that $[u]_N \neq [v]_N$. Let $w \in N(u)$ be such that $d(w, v) < d(u, v)$. First, perform moves not involving $R$ so that $[u]_N$ is fully occupied (if necessary). Then, (if necessary) perform moves not involving $R$ so that $[w]_N$ contains an unoccupied vertex, before moving $R$ to $[w]_N$. By following this procedure, we may move $R$ to $[v]_N$. The moves described earlier then allow us to move $R$ to $v$.
\end{proof}

\section{Characterising the non-mobile-resolvable graphs}\label{sec:exclud-non-res}

It was shown in~\cite{camacho} that every graph has a mobile resolving set. We also saw in Lemma~\ref{resolving bound} that all 2-connected graphs are mobile-resolvable. On the other hand, we have seen that path graphs of order $n \geq 3$ are not mobile-resolvable. This raises the question: which separable graphs have no completely mobile resolving set? We completely characterise these graphs in the following theorem.

\begin{theorem}
\label{th:charat-non-CMR}
Let $G$ be any connected graph. Then $G$ is non-mobile-resolvable if and only if it is a subdivided star graph $S_n(k)$ with $n \geq 3$ and $k \leq 2$, or a path of order at least three.
\end{theorem}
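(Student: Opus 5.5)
We prove the two directions separately.

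\textbf{Necessity: the listed graphs are not mobile-resolvable.} Paths were handled in the introduction, so let $T=S_n(k)$ with $n\ge 3$ and $k\le 2$, with centre $c$. Then $c$ is the unique exterior major vertex, with $\td(c)=n$. By the structure of metric bases of trees, a set $S$ resolves $T$ if and only if at most one leg of $c$ (component of $T-c$) is free of robots.

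Suppose $S$ were completely mobile. Consider the first time some robot $R$ enters the centre $c$. At that moment at most one leg is empty. Just before the move, $R$ was the only robot on its leg $L$, because a second robot on $L$ would have to leave $L$ through $c$ first. If $L$ is one of the at least $n-2\ge 1$ unsubdivided legs (a single leaf), then moving $R$ from that leaf to $c$ empties $L$. This is legal only if no other leg was empty.

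The key claim is an invariant. Robots on unsubdivided legs are frozen whenever every subdivided leg that is robot-free is blocked. More precisely, a robot in a length-$\geq 2$ leg can never pass a robot nearer the centre, exactly as in the path argument. Also, with $k\le 2$ there are at most two long legs, and the single-leaf legs force the following: whenever a robot sits at $c$, every leaf leg except at most one is occupied. The robot at $c$ can leave only into an empty leg, and must then empty nothing else.

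I expect the cleanest route to be an order argument. Consider the cyclic/linear order of robots along the path $P$ formed by the two long legs through $c$ (a path when $k\le 2$). Leaf legs act as side pockets of capacity one. I would show that two robots both lying on $P$ can never exchange their order along $P$. An exchange requires one of them to step into a side pocket while the other passes through $c$. But the pocket entered must have been empty, and the departing robot's old position then leaves a configuration in which two legs are empty (the pocket-free long leg, or another leaf leg). This makes the configuration non-resolving.

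Then the robots ordered extremally along $P$ cannot reach the far end of $P$. Even if $k=0$ (a star), $n-1$ robots are needed at all times on leaves. A robot at $c$ can only go to the unique empty leaf, so the occupied set is always ``all leaves but one plus possibly $c$''. No robot ever reaches an occupied leaf while its owner is stuck, and a short check shows that permutations are impossible. I expect this case analysis (pocket swaps with $k=1,2$) to be the main obstacle; I would try first to find a potential or invariant (relative order on $P$ of robots, ignoring pockets) and verify it is preserved by every legal move.

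\textbf{Sufficiency: every other graph is mobile-resolvable.} If $G$ is 2-connected, apply Lemma~\ref{resolving bound}. Otherwise $G$ is separable, and I would construct explicit completely mobile sets. For trees other than the excluded ones, there are either two major vertices or a star with at least three subdivided legs. Place one robot on every leg of each exterior major vertex, plus one spare robot (two for the exceptional subdivided stars). The spare lets robots rotate through the centre, as in the grid/cycle arguments, since two long legs plus a third long leg give room to reorder. For general separable graphs, decompose into blocks. Put enough robots (e.g.\ all but a bounded number of vertices per block, or a resolving set plus spares) so that a robot can be parked in each pendant structure while another passes through a cut vertex. Then use the 2-connected argument within blocks and the tree argument on the block-cut tree.
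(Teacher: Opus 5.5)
\textbf{Necessity.} The invariant you rely on, that two robots on $P$ can never exchange their order, is false. Take $S_3(2)$ with long legs $a_1a_2$ and $b_1b_2$ (where $a_1,b_1$ are adjacent to $c$) and leaf $\ell$. Place robots $R''$ at $a_2$, $R'$ at $a_1$, $R$ at $c$ and $Q$ at $b_2$. Perform $c\move\ell$ (robot $R$), then $a_1\move c\move b_1$ (robot $R'$), then $\ell\move c$ (robot $R$). After each move at most one leg of $c$ is empty, so every move is legal, yet $R$ and $R'$ have swapped along $P$.

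Your justification fails because the robot entering the pocket comes from $c$, and leaving $c$ empties no leg. Similarly, the robot that first enters $c$ need not be alone on its leg, since robots farther out are irrelevant.

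What is true, and suffices, is a weaker statement: the \emph{extreme} robot on $P$ (deepest in a long leg) never changes identity and never enters a leaf leg. Stepping from $c$ into an empty leaf leg requires every other leg to be occupied, in particular that long leg, so the robot stepping in is not extreme. Stepping from a leaf leg into $c$ requires all other legs, including the long legs, to be occupied, so the robot re-enters $P$ strictly inside it. This is in essence the paper's argument that the robot $R'$ lying behind $R$ in a non-trivial terminal path can never reach the leaf $z'$.

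\textbf{Sufficiency.} Your tree construction is not completely mobile, because surplus robots can block circulation. Take the double star with adjacent centres $x,y$, leaves $x_1,x_2$ at $x$ and leaves $y_1,y_2$ at $y$. Put robots on all four leaves and a spare at $x$. There is always exactly one hole. The hole reaches a leaf only when that leaf's robot steps to the centre, and then the only possible move into the hole is by that same robot. So the spare never visits a leaf.

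The paper instead uses exactly $\dim(T)+1$ robots. At one exterior major vertex with $\td(v)\ge 2$ all terminal legs are occupied, and at every other exterior major vertex exactly one terminal leg is left empty. This is what lets robots cycle between two exterior major vertices of terminal degree at least two.

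For separable non-trees your sketch supplies neither ingredient the paper needs:
\begin{enumerate}
\item[(i)] a criterion showing that configurations spread over several blocks remain resolving. The paper gets this from the decomposition $G^\times$, with bridges merged into tree components, together with Lemma~\ref{lem:quasiresolving} on quasi-occupied cut vertices;
\item[(ii)] an explicit mechanism by which a cyclic component acts as a turntable to exchange robots between the legs of an attached path or $S_n(k)$ with $k\le 2$. These tree pieces are not mobile-resolvable on their own.
\end{enumerate}
``Enough robots'' is not a substitute, and the double star shows that adding robots can destroy complete mobility.
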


In order to prove this theorem, we first require some primary results and a few technical tools. Specifically, we first need to consider the class of trees, with special emphasis on subdivided star graphs, which are considered next.

\begin{proposition}
\label{prop:subd-not-res}
Let $k\ge 0$ and $n\ge 3$ be two integers. Then a subdivided star graph $S_n(k)$ is non-mobile-resolvable if and only if $k\le 2$.
\end{proposition}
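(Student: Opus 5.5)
Throughout, write $c$ for the centre of $S_n(k)$ and call the $n$ maximal paths hanging from $c$ the \emph{legs}. There are $k$ \emph{long legs} (length at least $2$) and $n-k$ \emph{short legs} (a single leaf adjacent to $c$). The basic tool is the description of metric bases of trees recalled in Section~\ref{sec:intro}, specialised to a tree with the single exterior major vertex $c$. Its set form says that $S\subseteq V(S_n(k))$ is resolving if and only if at most one leg contains no vertex of $S$; a robot on $c$ does not count for any leg. I will check this directly: two vertices on a common leg, or at different distances from $c$, are separated by any robot on another leg, and conversely two empty legs contain twins with respect to $S$. So the whole problem becomes a sliding game with a single constraint: \emph{at every moment at most one leg is robot-free}.

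\textbf{Necessity ($k\le 2$ implies non-mobile-resolvable).} Fix any configuration with $r$ robots. Since at least $n-1$ legs are occupied, $r\ge n-1$, and robots on one leg keep their linear order forever because they can only pass one another through $c$. A robot on a short leg can leave only via $c$, and only when all other legs are occupied; at that moment its own leg becomes the unique empty leg.

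First I will treat the star $k=0$ as the model case. There are $n+1$ vertices and $r\ge n$ robots if any move is possible. If $r=n$, the unique hole simply swaps back and forth with $c$, so no robot ever changes leaf. If $r=n+1$, no move is possible at all.

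For $k=1,2$ I will look for an invariant of the same flavour. Each long leg behaves as a stack accessed only through $c$. Because at most one leg may be empty, whenever a robot stands on $c$ every other leg except possibly one is occupied. The claim I will aim to prove is: \emph{if a robot $R$ stands at the far leaf of a long leg $L$, then no other robot can ever reach that leaf.} To get any other robot there, all robots of $L$ must leave $L$ through $c$. During that phase $L$ is the unique empty leg, so every short leg and the other long leg (if any) stay occupied. I will argue that the evacuated robots of $L$ have nowhere to be parked except on $c$ or by pushing deeper into the other long leg. That long leg is itself a stack that must keep a robot inside it, so $R$ can only be re-inserted into $L$ in the same relative position. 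I will formalise this by induction on the number of moves: $R$ remains the robot of $L$ nearest to its far leaf whenever $L$ is nonempty. If instead no robot ever occupies a far leaf of a long leg, the robots are already not completely mobile, so I can assume the hypothesis of the claim.

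\textbf{Sufficiency ($k\ge 3$).} I will exhibit an explicit completely mobile resolving set. Let $L_1,L_2,L_3$ be three long legs. Put one robot on each short leaf and one robot on the vertex adjacent to $c$ of every long leg except $L_1$; $L_1$ is the current empty leg. Now every long leg has length at least $2$, so a robot can step from $c$ into the vertex of a long leg next to $c$ and move deeper, freeing room for a second robot in the same leg without emptying any other leg. Using two long legs as parking buffers and the third as the empty leg, I will show:
\begin{itemize}
\item any robot can be brought to $c$;
\item the robot on $c$ can walk to the end of any leg and back while every other leg stays occupied.
\end{itemize}
The key primitive is a legal sequence that swaps the robots on two leaves. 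Combining these two facts gives complete mobility, exactly as in the argument for complete multipartite graphs in Theorem~\ref{thm:complete multipartite}.

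\textbf{Main obstacle.} I expect the hard step to be the invariant for $k=2$, where two stacks plus the centre might seem to allow reorderings. I will first try to prove that while one long leg is being emptied, the other long leg must retain its innermost robot, so only one buffer position (the vertex $c$) is effectively available. That should make all reorderings trivial.
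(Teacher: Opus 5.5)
\textbf{The gap is in the sufficiency direction ($k\ge 3$).} Your starting configuration has only $n-1$ robots: one on each short leaf, and one on the neighbour of $c$ in every long leg except $L_1$. That is a metric basis. By your own characterisation, at least $n-1$ legs must be occupied at every moment. With $n-1$ robots this means every occupied leg holds exactly one robot and $c$ can never be entered, because stepping from a leg onto $c$ empties a second leg. So no robot ever leaves its own leg, and your bullet ``any robot can be brought to $c$'' fails at the very first move.

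One extra robot is still not enough. With $n$ robots, when the last robot $R$ leaves a long leg $L$, the other $n-1$ robots sit one per remaining leg, and none of them can step onto $c$. Your stack argument then again forces $R$ to be the first to re-enter $L$, so no other robot ever reaches the far leaf of $L$.

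Since the statement allows any number of robots, you need at least $n+1$. The paper's configuration works: all $n$ leaves plus the neighbour $x$ of the leaf $u_1$ of a long leg $L_1$. The robot at $x$ can roam over every unoccupied vertex, because every leg keeps its leaf robot. It exchanges places with a leaf robot by parking at the neighbour of $c$ on another long leg. Exchanging with the robot at $u_1$ needs two such parking spots, on two long legs other than $L_1$, at the same time, because a single leg is last-in-first-out. This is exactly where $k\ge 3$ is used.

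\textbf{The necessity half is sound.} It rests on the same mechanism as the paper. Once the last robot $R$ leaves a long leg $L$, that leg is the unique empty leg, every short-leg robot is frozen, and the other long leg is a stack with $R$ on top, so $R$ must re-enter $L$ first. You aim this at the far leaf of $L$ rather than at a short leaf $z'$, which is cleaner. It needs two repairs:
\begin{enumerate}
\item The correct invariant during the empty phase is ``$R$ is at $c$ or is the innermost robot of the other long leg''. $R$ can park in that leg, so your remark that only $c$ is available as a buffer is not right.
\item Your claim only excludes visits \emph{after} $R$ stands on the far leaf. You must apply it to the first robot that ever stands there.
\end{enumerate}
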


\begin{proof}
Let $z$ be the unique major vertex of $S_n(k)$. Assume first $k=0$, that is the case of the star $S_n$. It is very well known (see Subsection \ref{subsec:basic}) that any metric basis of $S_n$ has exactly $n-1$ vertices and that it is formed by a set $S$ of any $n-1$ leaves of $S_n$. It can be readily observed that if we place $n-1$ robots in a metric basis of $S_n$, then no robot can make any resolving movement. Thus, a metric basis is not a completely mobile resolving set. Also, if we consider a set $S$ of $n$ vertices in $S_n$, then there are two options.
\begin{itemize}
    \item $S$ is formed by all $n$ leaves of $S_n$, in which case each robot placed at any of the leaves of the star can visit $z$, but cannot visit a leaf of $S_n$ other than that where it started.
    \item $S$ is formed by $n-1$ leaves of $S_n$ and the vertex $z$, in which case, a robot placed at $z$ can visit the free leaf (the one not in $S$) and any of the robots placed at leaves of the star can visit $z$, but cannot visit any leaves of $S_n$ other than that where they started.
\end{itemize}
As a consequence, $S$ is not a completely mobile resolving set. Since the whole set of vertices of $S_n$ cannot be a completely mobile resolving set, we deduce that $S_n$ is not a mobile-resolvable graph.

Assume now that $k\ge 3$. Also, let $P^1,P^2, \dots, P^k$ be the non-trivial terminal paths of $S_n(k)$, let $u_1,u_2,\dots,u_k$ be the leaves of $S_n(k)$, and $w_1,w_2,\dots,w_k$ be the neighbours of $z$, belonging to $P^1,P^2, \dots, P^k$, respectively. Moreover, if $n>k$, then let $v_1,\dots, v_{n-k}$ be the leaves of $S_n(k)$ adjacent to $z$. Consider a set of $n+1$ robots placed at $S=\{u_1,u_2,\dots,u_k\}\cup \{v_1,\dots, v_{n-k}\}\cup \{x\}$ where $x$ is the neighbour of $u_1$ (notice that $x\ne z$ and that it might happen that $x=w_1$); see Figure \ref{fig-tree-1} for an example.

\begin{figure}[ht]
\centering
\begin{tikzpicture}[scale=.7, transform shape]
\node [draw, shape=circle] (c) at  (0,0) {};
\node [draw, shape=circle, fill=black] (a1) at  (-4,2) {};
\node [draw, shape=circle, fill=black] (a2) at  (-4,-2) {};
\node [draw, shape=circle] (b1) at  (4,2) {};
\node [draw, shape=circle] (b2) at  (4,-2) {};
\node [draw, shape=circle, fill=black] (c1) at  (8,2) {};
\node [draw, shape=circle] (c2) at  (8,-2) {};
\node [draw, shape=circle, fill=black] (d1) at  (12,2) {};
\node [draw, shape=circle, fill=black] (d2) at  (12,-2) {};
\node [draw, shape=circle, fill=black] (e1) at  (-4,0) {};
\node [draw, shape=circle] (e2) at  (4,0) {};
\node [draw, shape=circle, fill=black] (e3) at  (8,0) {};

\node at (-4.5,2) {$v_1$};
\node at (-4.5,0) {$v_2$};
\node at (-4.5,-2) {$v_3$};
\node at (0,0.5) {$z$};
\node at (4,2.5) {$w_1$};
\node at (4,0.5) {$w_2$};
\node at (4,-1.5) {$w_3$};
\node at (12,2.5) {$u_1$};
\node at (8,0.5) {$u_2$};
\node at (12,-1.5) {$u_3$};

\node at (8,2.5) {$x$};

\draw(a1)--(c)--(b1)--(c1)--(d1);
\draw(e1)--(c)--(e2)--(e3);
\draw(a2)--(c)--(b2)--(c2)--(d2);
\end{tikzpicture}
\caption{The initial set $S$ in a subdivided star $S_6(3)$.}
\label{fig-tree-1}
\end{figure}

In order to see that such a set is a completely mobile resolving set, we first observe that the robot placed at the vertex $x$ can legally visit every vertex of the graph, except those where a robot is placed in the initial configuration. In this sense, in order to complete our arguments, we only need to show that any robot placed at a vertex from $S\setminus\{x\}$ and the robot placed at $x$ can always exchange positions. To do this, assume $R_x$ is the robot placed at $x$.

\smallskip
\noindent
{\em Situation 1}- Robot $R$ placed at $v_j$ and $R_x$. We perform the sequences $R_x: x\move\cdots\move z\move w_2$, $R: v_j\move z\move\cdots\move x$, and then $R_x: w_2\move z\move v_j$, which completes the required exchange.

\smallskip
\noindent
{\em Situation 2}- Robot $R$ placed at $u_1$ and $R_x$. We perform the sequences $R_x: x\move\cdots\move z\move w_2$, $R: u_1\move x\move\cdots\move z\move w_3$, next $R_x: w_2\move z\move\cdots\move x\move u_1$, and end with $R: w_3\move z\move\cdots\move x$, which again completes the required exchange.

\smallskip
\noindent
{\em Situation 3}- Robot $R$ placed at $u_j$ with $j\ne 1$ and $R_x$. We perform the sequences $R_x: x\move\cdots\move z\move w_3$, $R: u_j\move\cdots\move z\move\cdots\move x$, and then $R_x: w_3\move z\move\cdots\move u_j$, to make the required exchange.

\medskip
Notice that these are all the possible exchanges between any two robots from the set $S$ where one of the robots is stationed at $x$. Therefore, we deduce that each robot from the set $S$ can visit any vertex of the graph $S_n(k)$ when $k\ge 3$, and so, it is a mobile-resolvable graph.

\bigskip
Assume finally that $k\in \{1,2\}$. In such a case, there are either one or two non-trivial terminal paths in $S_n(k)$. Also, since $n\ge 3$, there must be at least one leaf adjacent to $z$, say $z'$. Moreover, as in the case $k=0$, a metric basis is not a completely mobile resolving set, which means that each completely mobile resolving set contains at least $n\ge 3$ vertices. Suppose now there is a completely mobile resolving set $S$ of $S_n(k)$ with at least $n$ vertices, and station a robot at each of these vertices.

Since each robot must visit every vertex of $S_n(k)$ and because there are at least $n\ge 3$ robots, in order that every robot will visit the vertex $z'$, there must be a robot $R$ which performs the move $z\move z'$. Immediately before this, the vertex $z'$ must be unoccupied. Moreover, before the previous move $x\move z$ of $R$, (where $x \sim z$), the leaf $z'$ was also not occupied by a robot. If there is no other robot in the terminal path in which $x$ lies, then the move $x\move z$ is not a valid move, because the two vertices $x,z'$ are not identified by the set of robots at this stage. Thus, $x$ is not a leaf, and it belongs to a non-trivial terminal path, say $P$, in which there is stationed at least one another robot, say $R'$.

Consequently, there are at least two robots $R$ and $R'$ respectively located at two vertices $x,y$ from a non-trivial path $P$. By our arguments, $x$ is closer to $z$ than $y$, and we may without loss of generality assume that $y$ is the closest robot to the terminal leaf of $z$ lying in $P$ (there might be other robots in $P$). Since all the robots need to visit all the vertices, $R'$ also has to visit $z'$. To do so, $R$ needs to legally move to $z$, and further on to a vertex from a terminal path other than $P$, say $P'$, if such a path exists; or to an unoccupied leaf adjacent to $z$ (which must be $z'$). In the first case, we immediately observe that $k=2$. In the second case, to visit $z'$ later, a move $z'\move z\move w$ for a vertex $w$ of $P'$ must occur, giving $k=2$.

After this, all the remaining robots that were occupying positions in the non-trivial terminal path $P$ between $x$ and $y$ need to make similar valid moves until they are all located in $P'$. Then, $R'$ can move to $x$, and next would need to legally move as $x\move z\move z'$ to visit $z'$. However, already the first move $x\move z$ is not valid, because after it, there is no robot in $P$, and then $x,z'$ are not resolved by the set of robots in this moment. This means that $R'$ cannot visit the vertex $z'$, which is a contradiction. Thus, $S_n(k)$ does not have a completely mobile resolving set.

We deduce that $S_n(k)$ is not a mobile-resolvable graph when $k\in \{1,2\}$, and the proof is completed.
\end{proof}

The result above is now complemented with a complete study on the completely mobile metric dimension of trees. To do this, we first need the following result.

\begin{remark}
\label{rem:mmdt-trees}
For any mobile-resolvable tree $T$, $\mmd_c(T)\ge \dim(T)+1$.
\end{remark}

\begin{proof}
The bound can be readily observed by the already mentioned known structure of any metric basis of a tree. That is, if we select a metric basis $S$ of a tree $T$ and place a robot in each of the vertices of $S$, then in order to visit one of the exterior major vertices of $T$, say $w$, there will be two terminal paths of $w$ in which no robot is stationed. Thus, there will be two neighbours of $w$ that will not be resolved by the set of vertices occupied by the robots, which is not possible. Thus, no metric basis of $T$ forms a completely mobile resolving set of $T$, and the result follows
\end{proof}

Notice that the bound above can be also obtained from \eqref{eq-completely-mobile-not-tot} and a result already presented in~\cite[Corollary 2.5]{camacho}, saying that for any tree $T$, it holds $\mmd(T)=\dim(T)+1$.

\medskip
Having the remark above in mind, we now consider the completely mobile metric dimension of trees.

\begin{theorem}
\label{thm:mmdt-trees}
For any tree $T$ different from a path or from a subdivided star $S_n(k)$ with $k\le 2$,
$$\mmd_c(T)=\left\{\begin{array}{ll}
    \dim(T)+2; & \mbox{if $T$ is a subdivided star $S_n(k)$ with $k\ge 3$}\\
    \dim(T)+1; & \mbox{otherwise.}
\end{array}\right.$$
\end{theorem}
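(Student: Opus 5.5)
The plan splits into a lower bound for subdivided stars, an upper bound for subdivided stars, and a matching construction for all other trees.

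\emph{Lower bound.} Remark~\ref{rem:mmdt-trees} already gives $\mmd_c(T)\ge \dim(T)+1$. So it only remains to show $\mmd_c(S_n(k))\ge \dim(S_n(k))+2=n+1$ when $k\ge 3$; note that $\dim(S_n(k))=n-1$ by \eqref{eq-dim-trees}.

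Suppose $n$ robots form a completely mobile resolving set. Whenever the centre $z$ is unoccupied, every terminal path but at most one must contain a robot, or two neighbours of $z$ are unresolved. Now count robots at the moment some robot sits on $z$. The remaining $n-1$ robots must occupy all but one terminal path. Because $z$ does not resolve pairs of neighbours of $z$, each such path holds exactly one robot, and one path is empty.

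I would then argue as in Proposition~\ref{prop:subd-not-res} that robots can never pass one another inside a terminal path. A robot wanting to enter a trivial leaf $z'$ from $z$ needs $z'$ empty and all other paths occupied. A robot on a path holding a second robot cannot leave via $z$ without leaving its path empty at a time when another path is also empty, unless the count works out. The expected contradiction is that the cyclic order of robots within a path, or the ``one robot per path'' invariant, is preserved. In particular, a robot that starts on one terminal path can never reach the leaf of another path while that path's own robot sits between it and $z$; the two would have to swap, which needs a spare robot.

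This is the step I expect to be the main obstacle. It requires a careful invariant argument that with $n$ robots each terminal path contains at most one robot at all times, once $z$ has been visited. From there, robots are locked to an ordering and cannot swap within a path.

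\emph{Upper bounds.} For $S_n(k)$ with $k\ge3$, the construction in Proposition~\ref{prop:subd-not-res} already uses $n+1=\dim+2$ robots, so that case is done.

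For the remaining trees (at least two exterior major vertices, or more generally not a subdivided star), place $\dim(T)+1$ robots as follows. Take a metric basis, with one robot on a leaf of every terminal path except one per exterior major vertex, and add one extra ``free'' robot. The key observation is that for a tree with at least two exterior major vertices, a robot sitting at an exterior major vertex $v$, or on the branch towards another major vertex, still resolves pairs via robots at other exterior major vertices. So the free robot may leave a terminal path of $v$ empty temporarily, provided every other exterior major vertex keeps its configuration.

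I would first show that the free robot can reach every vertex. Then I would show that it can swap positions with any basis robot, by parking in another terminal path of the same exterior major vertex (or, if $\td(v)=1$, using the presence of a second major vertex to break symmetry) and using the cycle-free exchange manoeuvres of Situations 1--3 of Proposition~\ref{prop:subd-not-res}, applied locally at each exterior major vertex. Swaps compose to transitivity, so every robot visits every vertex, giving $\mmd_c(T)\le\dim(T)+1$.

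The delicate subcase is a tree where some exterior major vertex has all its terminal paths trivial, as in a star with $k\le2$ at that vertex. There I would route the exchanges through an internal path to another major vertex, which plays the role of the third terminal path used in the $k\ge3$ argument.
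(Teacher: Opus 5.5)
There are two genuine gaps. The upper bound for $S_n(k)$ with $k\ge 3$ and the bound $\mmd_c(T)\ge\dim(T)+1$ are fine as you have them.

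\emph{Lower bound for $S_n(k)$.} The invariant you plan to prove is false. You claim that once $z$ has been visited, every terminal path holds at most one robot. But suppose a robot stands on $z$ and exactly one path $E$ is empty. It may legally step into an occupied non-trivial path whose robot is not adjacent to $z$, because afterwards still only $E$ is empty.

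What is true is the following. With $n$ robots at most one path is ever empty, so every legal configuration is one of three types:
(a) one robot on each path;
(b) a robot on $z$, one path $E$ empty, the others singly occupied;
(c) $z$ free, one path $E$ empty, and one path holding two robots.

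While $E$ is empty, a singly occupied path can never be vacated, since its neighbour of $z$ and that of $E$ would be unresolved. Also, the lower robot of the doubled path cannot pass the upper one. Hence throughout a phase in which $E$ stays empty, only one robot ever stands on $z$: the one on $z$, or the upper robot of the doubled path. The phase can end only with that robot entering $E$. So each robot has a fixed home path and never reaches the leaf of another path, because that leaf is occupied by, or lies beyond, that path's own robot. This phase argument is what rules out $n$ robots, not a one-robot-per-path invariant.

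\emph{Upper bound for the other trees.} You propose to exchange the free robot with a basis robot locally at an exterior major vertex $v$, using Situations 1--3 or an internal path as the `third terminal path'. This cannot work. Those manoeuvres need all terminal paths of the centre occupied plus a spare robot. With $\dim(T)+1$ robots, $v$ never carries more than $\td(v)$ robots on its terminal paths, since every other exterior major vertex $w$ always needs $\td(w)-1$. In fact, the phase argument above shows the following: as long as no robot enters or leaves the terminal paths of other exterior major vertices, the robots on the terminal paths of $v$ are never permuted. Here the branches of $v$ towards other major vertices act merely as parking space.

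The obstruction is that two neighbours of $v$ on distinct empty terminal paths are unresolved by every robot outside those paths. It has nothing to do with the paths being trivial, and extra parking space does not remove it. For the same reason, your `key observation' about robots at other exterior major vertices does not help.

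The missing ingredient is global. Since $T$ is not a subdivided star, the leaves of the subtree spanned by the major vertices give two exterior major vertices $x,y$ with $\td(x),\td(y)\ge 2$. Place the extra robot so that all terminal leaves of $x$ are occupied. A robot on a terminal leaf of a fully occupied exterior major vertex is then free, because the others form a metric basis. Moving the surplus between such vertices changes which robot is free and where the holes are; the paper does this by rotating the robots on $x_1,x_2,y_1$ through the hole $y_2$. This is what lets every robot reach every vertex.
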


\begin{proof}
Assume first $T$ is not a subdivided star $S_n(k)$. Consider a set $S$ of vertices of the tree $T$ containing all but one terminal leaf from each of the exterior major vertices of the tree, except from exactly one exterior major vertex, which has at least two terminal leaves, all of which we include in $S$. Notice that since $T$ is not a path, it has such an exterior major vertex. In addition, by the assumption that $T$ is not a subdivided star, $T$ has at least two exterior major vertices with at least two terminal leaves.  We station one robot at each of the vertices of $S$. See Figure \ref{fig-tree-proof} for a representative example.

\begin{figure}[ht]
\centering
\begin{tikzpicture}[scale=.57, transform shape]
\node [draw, shape=circle] (x1) at  (0,0) {};
\node [draw, shape=circle] (y1) at  (2,0) {};
\node [draw, shape=circle] (z1) at  (4,0) {};
\node [draw, shape=circle] (x2) at  (7,0) {};
\node [draw, shape=circle] (y2) at  (9,0) {};
\node [draw, shape=circle] (z2) at  (11,0) {};
\node [draw, shape=circle] (x3) at  (14,0) {};
\node [draw, shape=circle] (y3) at  (16,0) {};
\node [draw, shape=circle] (z3) at  (18,0) {};
\node [draw, shape=circle] (x4) at  (21,0) {};
\node [draw, shape=circle] (y4) at  (23,0) {};
\node [draw, shape=circle] (z4) at  (25,0) {};
\node [draw, shape=circle, fill=black] (y22) at  (9,-2.5) {};
\node [draw, shape=circle, fill=black] (x33) at  (14,-2.5) {};
\node [draw, shape=circle, fill=black] (z33) at  (18,-2.5) {};
\node [draw, shape=circle, fill=black] (x44) at  (21,-2.5) {};
\node [draw, shape=circle, fill=black] (y44) at  (23,-2.5) {};
\node [draw, shape=circle, fill=black] (z44) at  (25,-2.5) {};
\node [draw, shape=circle, fill=black] (x444) at  (0,-2.5) {};
\node [draw, shape=circle, fill=black] (y444) at  (4,-2.5) {};
\node [draw, shape=circle, fill=black] (z444) at  (7,-2.5) {};
\node [draw, shape=circle, fill=gray] (c1) at  (2,3) {};
\node [draw, shape=circle, fill=gray] (c2) at  (9,3) {};
\node [draw, shape=circle, fill=gray] (c3) at  (16,3) {};
\node [draw, shape=circle, fill=gray] (c4) at  (23,3) {};
\node [draw, shape=circle, fill=gray] (d1) at  (5.5,3) {};
\node [draw, shape=circle, fill=gray] (d2) at  (19.5,3) {};
\node [draw, shape=circle] (e1) at  (5.5,5) {};
\node [draw, shape=circle] (e2) at  (19.5,5) {};

\draw(x1)--(c1)--(y1);
\draw(c1)--(z1);
\draw(d1)--(e1);
\draw(d2)--(e2);

\draw(x444)--(x1);
\draw(y444)--(z1);
\draw(z444)--(x2);

\draw(c1)--(d1)--(c2)--(c3)--(d2)--(c4);
\draw(y22)--(y2)--(c2)--(x2);
\draw(c2)--(z2);

\draw(x33)--(x3)--(c3)--(z3)--(z33);
\draw(c3)--(y3);

\draw(x44)--(x4)--(c4)--(z4)--(z44);
\draw(c4)--(y4)--(y44);
\end{tikzpicture}
\caption{A tree $T$ provided with a set $S$ (in bold) as described in the proof. Exterior major vertices appear in gray.}
\label{fig-tree-proof}
\end{figure}

We now show that $S$ is a completely mobile resolving set of $T$. 
Since $T$ is not a subdivided star, we may consider two of its exterior major vertices, say $x$ and $y$ such that $\td(x)\ge 2$, $\td(y)\ge 2$, and assume w.l.o.g. that all the terminal leaves of $x$ are in $S$. Recall that, according to the structure of the metric basis of a tree, outlined in Section \ref{subsec:basic}, if an exterior major vertex $z$ satisfies that $\td(z)=1$, then $S$ has no vertex in the unique terminal path of $z$.

We first observe that any robot stationed in a terminal leaf of $x$, say $x_1$, can make a sequence of valid moves to visit each vertex in $(V(T)\setminus S )\cup \{x_1\}$ before returning to its initial position. Notice that this sequence includes visiting exterior major vertices whose terminal degree equals one together with the vertices of their terminal paths.

Now, let $x_2\ne x_1$ be another terminal leaf of $x$, and let $y_1,y_2$ be terminal leaves of $y$. Assume that initially $y_1\in S$ and $y_2 \notin S$.  Let $R_1,R_2,R_3$ be the robots stationed at $x_1,x_2,y_1$, respectively. We next see that these three robots can exchange their positions and visit the four vertices $x_1,x_2,y_1,y_2$. This can be made by the following sequences of valid resolving movements.
\begin{itemize}
    \item $R_1: x_1\move\cdots\move y_2$; $R_3: y_1\move\cdots\move x_1$; $R_2: x_2\move\cdots\move y_1$.
    \item $R_1: y_2\move\cdots\move x_2$; $R_3: x_1\move\cdots\move y_2$; $R_2: y_1\move\cdots\move x_1$.
    \item $R_1: x_2\move\cdots\move y_1$; $R_3: y_1\move\cdots\move x_2$; $R_2: x_1\move\cdots\move y_2$.
\end{itemize}
Notice that whenever a robot from the three above is located at the vertex $x_1$, and all the terminal leaves of $x$ are occupied, the robot at $x_1$ can make a sequence of valid resolving movements, and visit all the vertices of $T$ except those already occupied by a robot.

Now, one may observe that the robot placed at $x_1$ and other robots can be considered in a similar manner to deduce the existence of valid sequences of resolving movements that allow the robots to visit all the vertices of $T$. Therefore, $S$ is a completely mobile resolving set of $T$, and so, $\mmd_c(T)\le |S|=\dim(T)+1$. In addition, by using Remark \ref{rem:mmdt-trees}, the required equality holds for the case when $T$ is  not a subdivided star.

\medskip
Assume now $T$ is a subdivided star $S_n(k)$ with $k\ge 3$ and let $w$ be its unique exterior major vertex. By using the arguments of the proof of Proposition \ref{prop:subd-not-res}, we deduce that $\mmd_c(S_n(k))\le \dim(S_n(k))+2$. Also, from Remark~\ref{rem:mmdt-trees}, it holds that $\mmd_c(S_n(k))\ge \dim(S_n(k))+1$. Suppose that $\mmd_c(S_n(k))=\dim(S_n(k))+1=n$ and let $S$ be completely mobile metric basis.

Due to the structure of a resolving set in a tree, $S$ must have one vertex in $n-1$ terminal paths of $w$, and the remaining vertex of $S$ can be anywhere in the subdivided star $S_n(k)$. It can be readily observed that any robot placed at a vertex from $S$ can legally visit all the vertices of $S_n(k)$ except their leaves. However, by using similar arguments as in the case $k\in\{1,2\}$ from the proof of Proposition \ref{prop:subd-not-res}, such a configuration of robots does not allow any exchange of position between any two robots. Thus, a robot placed at a leaf $x$ in subdivided star $S_n(k)$, cannot visit any other leaf of $T$ other than $x$, which is a contradiction. Therefore,  $\mmd_c(S_n(k))>\dim(S_n(k))+1$, and thus the desired equality follows for this case, and the proof is completed.
\end{proof}

We are now in position to present the proof of the key result from the section.

\subsection{Proof of Theorem \ref{th:charat-non-CMR}}

\smallskip
For a separable graph $G$, we construct the collection $G^\times$ of edge-disjoint subgraphs of $G$ obtained as follows. To begin, add each block of $G$ to $G^\times$. Recall that a block in $G$ is a maximal 2-connected subgraph of $G$. If there exist $T_1, T_2 \in G^\times$ which are both trees with $V(T_1) \cap V(T_2) \neq \emptyset$, then remove $T_1$ and $T_2$ from $G^\times$ and add the subgraph of $G$ induced by $V(T_1) \cup V(T_2)$. Repeat this until there are no such trees. Note that at the very first stage, the only trees in $G^\times$ will be isomorphic to $K_2$. After the procedure is completed, any tree in $G^\times$ is called a \emph{tree component}. All other elements of $G^\times$ are \emph{cyclical components} (since any two vertices are contained in a common cycle). Notice that any cyclical component has at least $3$ vertices. In order to help the reader, in Figure~\ref{fig:G-prime} we have exemplified the described procedure.

\begin{figure}[ht]
  \centering
\begin{tikzpicture}[
    v/.style={circle, fill=black, inner sep=0pt, minimum size=5pt},
    e/.style={thick}
]

\foreach \i/\xShift in {1/0, 2/4.5, 3/9} {

    \node[v] (t1_\i) at (\xShift + 0.0, 2.8) {};
    \node[v] (t2_\i) at (\xShift + 1.2, 2.8) {};
    \node[v] (t3_\i) at (\xShift + 0.0, 1.6) {};
    \node[v] (t4_\i) at (\xShift + 1.2, 1.6) {};

    \draw[e] (t1_\i) -- (t2_\i) -- (t4_\i) -- (t3_\i) -- cycle; 
    \draw[e] (t1_\i) -- (t4_\i); 
    \draw[e] (t2_\i) -- (t3_\i); 

    \node[v] (b1_\i) at (\xShift + 0.0, -1.6) {};
    \node[v] (b2_\i) at (\xShift + 1.2, -1.6) {};
    \node[v] (b3_\i) at (\xShift + 0.0, -2.8) {};
    \node[v] (b4_\i) at (\xShift + 1.2, -2.8) {};

    \draw[e] (b1_\i) -- (b2_\i) -- (b4_\i) -- (b3_\i) -- cycle; 
    \draw[e] (b1_\i) -- (b4_\i); 
    \draw[e] (b2_\i) -- (b3_\i); 
}

\node[v] (c1) at (1.2, 0) {};
\node[v] (l1) at (0.1, 0) {};
\node[v] (r1_1) at (2.3, 1.0) {};
\node[v] (r1_2) at (2.4, 0.0) {};
\node[v] (r1_3) at (2.3, -1.0) {};

\draw[e] (t4_1) -- (c1) -- (b2_1);
\draw[e] (l1) -- (c1);
\draw[e] (c1) -- (r1_1);
\draw[e] (c1) -- (r1_2);
\draw[e] (c1) -- (r1_3);

\node[v] (v_top_1) at (5.7,  1.2) {};
\node[v] (v_top_2) at (5.7,  0.45) {};
\draw[e] (v_top_1) -- (v_top_2);

\node[v] (v_bot_1) at (5.7, -0.45) {};
\node[v] (v_bot_2) at (5.7, -1.2) {};
\draw[e] (v_bot_1) -- (v_bot_2);

\node[v] (h_left_1) at (4.4, 0.0) {};
\node[v] (h_left_2) at (5.2, 0.0) {};
\draw[e] (h_left_1) -- (h_left_2);

\node[v] (d_tr_1) at (6.2,  0.45) {};
\node[v] (d_tr_2) at (6.9,  1.15) {};
\draw[e] (d_tr_1) -- (d_tr_2);

\node[v] (h_right_1) at (6.45, 0.0) {};
\node[v] (h_right_2) at (7.55, 0.0) {};
\draw[e] (h_right_1) -- (h_right_2);

\node[v] (d_br_1) at (6.2, -0.45) {};
\node[v] (d_br_2) at (6.9, -1.15) {};
\draw[e] (d_br_1) -- (d_br_2);

\node[v] (c3) at (10.2, 0) {};
\node[v] (t3_ext) at (10.2, 0.9) {};
\node[v] (b3_ext) at (10.2, -0.9) {};
\node[v] (l3) at (9.1, 0) {};
\node[v] (r3_1) at (11.3, 1.0) {};
\node[v] (r3_2) at (11.4, 0.0) {};
\node[v] (r3_3) at (11.3, -1.0) {};

\draw[e] (t3_ext) -- (c3) -- (b3_ext);
\draw[e] (l3) -- (c3);
\draw[e] (c3) -- (r3_1);
\draw[e] (c3) -- (r3_2);
\draw[e] (c3) -- (r3_3);

\end{tikzpicture}
\caption{An example of generating the collection $G^\times$. In the left hand side, a graph $G$ is drawn. In the first step, all the subgraphs from the center are included in $G^\times$. In the last step, the final collection of subgraphs in $G^\times$ drawn in the right hand side is obtained.}\label{fig:G-prime}
\end{figure}

Let $C$ be the set of cut vertices of $G$ which are contained in a cycle.
Distinct components $H_1, H_2 \in G^\times$ are \emph{attached $($by} $v$), if $V(H_1) \cap V(H_2) = \{ v \}$ (by our construction, $v \in C$). Note that, by the construction procedure, no two tree components are attached (by a vertex) and each vertex in $V(G) \setminus C$ lies in exactly one component in $G^\times$. The following fact can be then observed on the basis of our construction, where $G-v$ represents the graph obtained from $G$ by removing the vertex $v\in V(G)$ and all the edges incident with it.

\begin{fact}
  \label{fac:cutreplacement}
Let $S_H \subseteq V(G)$ be a set that resolves the vertices of a component $H \in G^\times$. If $v \in S_H \cap V(H)$ and $w \notin S$ is a vertex such that $d(u, w) = d(u, v) + d(v, w)$ for all $u \in H$, then $S \setminus \{ v \} \cup \{ w \}$ also resolves the vertices of $H$.
\end{fact}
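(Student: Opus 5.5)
The statement to prove is Fact~\ref{fac:cutreplacement}. It is almost a direct distance computation, so the plan is to unfold the definition of resolving and reduce the claim to a single identity. I will read $S$ in the statement as $S_H$, the set that resolves the vertices of $H$. Set $S' = (S_H\setminus\{v\})\cup\{w\}$. Fix two distinct vertices $a,b\in V(H)$; the goal is to find a vertex of $S'$ that resolves them.

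By hypothesis some $s\in S_H$ resolves $a,b$. If $s\neq v$, then $s\in S'$ and there is nothing to prove. So the only case needing work is when $v$ is the \emph{only} vertex of $S_H$ resolving $a,b$, that is, $d(a,v)\neq d(b,v)$.

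In that case I use the hypothesis $d(u,w)=d(u,v)+d(v,w)$ for every $u\in V(H)$. Applying it to $u=a$ and $u=b$ gives
\[
d(a,w)-d(b,w)=d(a,v)-d(b,v)\neq 0 .
\]
Hence $w$ resolves $a,b$, and $w\in S'$. Since every pair in $V(H)$ is resolved by some vertex of $S'$, the set $S'$ resolves the vertices of $H$.

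There is no real obstacle here. The only care needed is in the hypothesis $w\notin S$, which just ensures that $S'$ is a genuine replacement of $v$ by $w$ rather than a deletion; the argument does not otherwise use it. I would also note how the fact will be applied later. Typically $v\in C$ is the attachment vertex of $H$, and $w$ lies in a component hanging off $H$ at $v$. In that situation the geodesic hypothesis holds automatically, because every path from $H$ to $w$ passes through the cut vertex $v$. This is what lets a robot on $H$ leave through $v$ without destroying the resolution of $H$.
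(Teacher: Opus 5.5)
Your argument is correct and is the verification the paper has in mind. The paper states this as an observation without proof; the identity $d(a,w)-d(b,w)=d(a,v)-d(b,v)$ for $a,b\in V(H)$, together with the fact that every other vertex of $S_H$ remains in the new set, is all that is needed. Your remark that the hypothesis $w\notin S$ is never used is also accurate, and the argument works equally if $S$ is read as any superset of $S_H$.
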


Now, for any $S \subseteq V(G)$ and $H \in G^\times$, we shall say that $v \in V(H)$ is \emph{quasi-occupied in $H$ $($under $S$$)$} if there exists $v^o \in S$ such that $d(u, v^o) = d(u, v) + d(v, v^o)$ for all $u \in V(H)$. Notice that, in this sense, if $v \notin C$, then $v$ is quasi-occupied in $H$ if and only if $v \in S$.

\begin{lemma}
  \label{lem:quasiresolving}
  Let $S \subseteq V(G)$ be such that for all tree components $T \in G^\times$, the set of quasi-occupied vertices in $T$ resolves $T$ and for all cyclical components $H \in G^\times$, at most one vertex in $H$ is not quasi-occupied. Then, $S$ is a resolving set of $G$.
\end{lemma}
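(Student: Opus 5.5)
We prove that any two distinct vertices $a,b\in V(G)$ are resolved by $S$. The plan is to work inside a single component $H\in G^\times$: we show that some component ``sees'' the pair $a,b$ in a controlled way, use the hypothesis there to get a quasi-occupied vertex resolving a suitable pair in $H$, and then transfer the resolution to an actual robot via the defining identity $d(u,v^o)=d(u,v)+d(v,v^o)$ for $u\in V(H)$.

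\textbf{Projection.} For each $H\in G^\times$ and each vertex $a$, let $\pi_H(a)$ be the vertex of $H$ closest to $a$. This is $a$ itself if $a\in V(H)$, and otherwise the cut vertex through which $a$ is attached to $H$. It is well defined because the components of $G^\times$ are glued along cut vertices in a tree-like (block-tree) fashion. For $u\in V(H)$ we then have $d(u,a)=d(u,\pi_H(a))+d(\pi_H(a),a)$.

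\textbf{Case split.} Given $a\neq b$, pick a component $H$ on the path between them in the block-cut structure. We aim to choose $H$ so that $a':=\pi_H(a)\neq \pi_H(b)=:b'$. If both $a,b$ lie in a common component, take $H$ to be it. Otherwise, take $H$ to be a component containing an edge of a shortest $a,b$-path, chosen so that the path enters and leaves $H$ at distinct vertices. Such an $H$ exists because the $a,b$-path must pass through at least one component using at least one edge.

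\textbf{Resolving in $H$.} Since $a'\neq b'$, we find a quasi-occupied $v\in V(H)$ with $d(v,a')\neq d(v,b')$.
\begin{itemize}
\item If $H$ is a tree component, this follows directly from the hypothesis that the quasi-occupied vertices resolve $T$.
\item If $H$ is cyclical, at most one vertex of $H$ is not quasi-occupied, so one of $a',b'$ (say $a'$) is quasi-occupied. Then $v=a'$ works, since $d(a',a')=0\neq d(a',b')$.
\end{itemize}

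\textbf{The main obstacle.} The difficulty is to pass from $a',b'$ back to $a,b$, which requires controlling the tails $d(a',a)$ and $d(b',b)$. The difference is
\[d(v^o,a)-d(v^o,b)=\big(d(v,a')+d(a',a)\big)-\big(d(v,b')+d(b',b)\big),\]
which is valid only when $v^o$ sits ``behind'' $v$ relative to both $a$ and $b$, and the tails need not cancel. To handle this, choose $H$ more carefully: take $H$ to be the component containing $a$ (when $a\notin C$), or a component containing $a$ on the side toward $b$ when $a\in C$. Then $a'=a$, and only $b$ has a tail.

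\begin{itemize}
\item \textbf{Cyclical $H$.} Suppose first that $a$ is quasi-occupied, with witness $a^o$. If $b\notin V(H)$, we want to show $d(a^o,a)\neq d(a^o,b)$. Since $a^o$ attaches behind $a$, the path from $a^o$ to $b$ runs through $a$ and then on to $b'$, so $d(a^o,b)>d(a^o,a)$. If $a$ is instead the unique non-quasi-occupied vertex, then $b'$ is quasi-occupied, and the same reasoning applied to $b'$'s witness reduces to comparing $d(b',a)$ with $d(b',b)$. This needs a further descent into the component containing $b$.
\item \textbf{Tree $H$.} The union of tree components sharing vertices was merged, so across a cyclical neighbour a tail again passes through a quasi-occupied or uniquely missed cut vertex.
\end{itemize}

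We expect this bookkeeping to be the main difficulty. I would organise it as an induction on the number of components of the block-cut path between $a$ and $b$, descending into the component containing $b$ whenever the witness in the current component fails.
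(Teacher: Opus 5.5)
\textbf{There is a genuine gap: for pairs in different components the proposal stops at describing the obstacle.}

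What you do establish is correct. The gate identity holds. So does the fact that the witness $v^o$ of a quasi-occupied $v\in V(H)$ lies in the branch hanging at $v$; you use this without proof, but it follows by applying the defining identity with $u$ equal to the gate of $v^o$ in $H$. Your treatment of pairs lying in a common component is also correct.

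The two cases you leave open are real gaps.
\begin{enumerate}
\item Suppose $a$ lies in a tree component $T$ and $b\notin V(T)$. The hypothesis on $T$ only gives a quasi-occupied vertex resolving $a$ and $b'=\pi_T(b)$, and the tail beyond $b'$ can cancel that difference. So nothing inside $T$ settles the pair. Your sentence about a tail passing ``through a quasi-occupied or uniquely missed cut vertex'' is not an argument. An induction that descends into the component containing $b$ does not help either: that component may again be a tree, and the same cancellation then occurs from $a$'s side.
\item Suppose $H$ is cyclical and $a$ is its non-quasi-occupied vertex. As you note yourself, the witness of $b'$ may sit on $b$'s branch and fail.
\end{enumerate}

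\textbf{The missing idea} is to drop the normalisation $a'=a$ and work in a cyclical component crossed by the route from $a$ to $b$.
\begin{itemize}
\item Every component $J$ met on this route in the block-cut structure satisfies $\pi_J(a)\neq\pi_J(b)$, there are at least two such components, and consecutive ones are attached. Since no two tree components are attached, one of them, $J$, is cyclical.
\item Put $a''=\pi_J(a)\neq b''=\pi_J(b)$. For a quasi-occupied $w\in V(J)\setminus\{a''\}$ you have $d(a,w^o)=d(a,a'')+d(a'',w)+d(w,w^o)$, and symmetrically for $b$.
\item If $b''$ is quasi-occupied, its witness resolves $a,b$ unless $d(b,b'')\ge d(a,a'')+d(a'',b'')$. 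Likewise for $a''$. Both failing would force $2d(a'',b'')\le 0$, which is impossible.
\item Otherwise, say $a''$ is the non-quasi-occupied vertex and the witness of $b''$ fails. Take a neighbour $w\neq b''$ of $a''$ in $J$; it exists because $J$ is a $2$-connected block on at least three vertices. Then $w$ is quasi-occupied, and $d(a,w^o)-d(b,w^o)=d(a,a'')+1-d(b,b'')-d(b'',w)\le 1-d(a'',b'')-d(b'',w)<0$.
\end{itemize}
This settles both open cases at once, with no induction. It is essentially what the paper does, using the two quasi-occupied vertices in $N_{H_u}[u]$ and the cyclical components $J_u,J_v$ at $c_u,c_v$.
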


\begin{proof}
  Let $u,v \in V(G) \setminus S$ be distinct vertices and $H_u, H_v \in G^\times$ be such that $u \in V(H_u)$ and $v \in V(H_v)$. We should choose $H_u = H_v$ if possible. If $H_u = H_v$, then $u$ and $v$ are resolved by Fact~\ref{fac:cutreplacement}. We consider the two other cases in turn.

  First suppose that $H_u \neq H_v$ and $H_u$ is a cyclical component. By our assumption and the fact that cyclical components have at least $3$ vertices, it holds that at least two vertices in $N_{H_u}[u]$ are quasi-occupied. 
  We see that $u$ and $v$ are resolved by one of the corresponding occupied vertices.

  Now assume that $H_u$ and $H_v$ are distinct tree components. Let $c_u$ and $c_v$ be the first and last vertices in $C$ in any shortest $u,v$-path. Both $c_u$ and $c_v$ lie in cyclical components $J_u$ and $J_v$, respectively. First, consider that we may choose $J_u$ and $J_v$ so that $N_{J_u}(c_u) \nsubseteq N_{G}[c_v]$ and $N_{J_v}(c_v) \nsubseteq N_{G}[c_u]$. Then there exists $w \in N_{J_u}[c_u]$ which is quasi-occupied with $d(v, w^o) = d(v, c_v) + d(c_v, w) + d(w, w^o) > d(v, c_v) + d(c_u, w) + d(w, w^o)$ and $d(u, w^o) \leq d(u, c_u) + d(c_u, w) + d(w, w_o)$. Either $w^o$ resolves $u$ and $v$, or $d(v, c_v) < d(u, c_u)$. But by the same reasoning, with $u$ and $v$ interchanged, if $u$ and $v$ are not resolved, then we obtain $d(v, c_v) > d(u, c_u)$, which is a contradiction.

  Next, assume that there is no such choice of $J_u$ and $J_v$ and take $J_u = J_v$.
  Then, either $c_u \sim c_v$ and both vertices are quasi-occupied in $J_u$, or there exists $w \in N_{J_u}(c_u) \cap N_{J_u}(c_v)$ which is quasi-occupied. In the first case, either $c_u^o$ resolves $u$ and $v$ or $d(u, c_u) > d(v, c_v)$, so that $c_v^o$ resolves $c_u$ and $c_v$.
  In the second case, either $w$ resolves $u$ and $v$ or $d(u, c_u) = d(v, c_v)$. If $d(u, c_u) = d(v, c_v)$, since either $c_u$ or $c_v$ is quasi-occupied in $J_u$, either $c_u^o$ or $c_v^o$ resolves $u$ and $v$.
\end{proof}

In order to complete the proof of our Theorem \ref{th:charat-non-CMR}, from Proposition~\ref{prop:subd-not-res} and the discussion in the introduction, we only need to prove the `only if' direction. Hence, we now assume that $G$ is neither a subdivided star graph $S_n(k)$ with $n \geq 3$ and $k \leq 2$, nor a path of order at least three. We show that there exists a completely mobile resolving set of $G$. By Theorem~\ref{thm:mmdt-trees}, it suffices to assume that $G$ is not a tree.

\medskip
We construct a set $S$ of the form of Lemma~\ref{lem:quasiresolving}. First, for each tree component $T \in G^\times$ which is not an $S_n(k)$, add all vertices in a completely mobile resolving set of $T$ to $S$.
Add a single vertex to $S$ from each component isomorphic to a $P_n$ with at least one leaf vertex in $C$. In what follows, these components shall be regarded as paths (not subdivided stars). Any component isomorphic to a $P_n$ with no leaf vertex in $C$ should be regarded as an $S_2(k)$ with major vertex equal to one of its vertices in $C$.
For each component $T$ which is an $S_n(k)$ with $n \geq 2$, if $V(T) \cap C$ consists of the major vertex, add each leaf vertex to $S$. Otherwise, add $n-1$ leaf vertices to $S$.
Add any vertex in $C$ not lying in a tree component to $S$. For each cyclical component $H$, add all but one of the vertices in $V(H) \setminus S$ which do not attach $H$ to a tree component (if any such vertices exist).

We now show that $S$ is a completely mobile resolving set. In what follows, moves between vertices should be performed over a shortest path (unless otherwise specified). Begin by stationing robots at the vertices in $S$. Consider $H \in G^\times$ as a cyclical component. Let $u, v \in V(H)$ be such that $v$ is occupied by a robot $R$. We show how to move $R$ to $u$. For each unoccupied vertex which attaches $H$ to a tree component $T$, perform moves in $T$ resulting in the vertex becoming occupied.

If there exists $w \in V(H) \setminus (\{ v \} \cup (\{ u \} \cap C))$ which is either unoccupied or attaches $H$ to a tree component, then perform moves in the attached tree component, if necessary, so that $w$ becomes unoccupied. There is a $u,w$-path in $H$ of the form $u=v_0, v_1, \dots, v_r=w$ with $v_i \neq v$ occupied for $0 \leq i <r$. We move $v_i \move v_{i+1}$ for $i=r-1, r-2, \dots, 0$ in turn, so that $u$ is unoccupied. In a similar way, we then `shift' the robots in a $v,u$-cycle in $H$ as many times as necessary to move $R$ to $u$.

If we cannot choose such a $w$, then each vertex in $V(H) \setminus \{ v, u \}$ attaches $H$ to a cyclical component. Choose one such vertex $w$ and take the largest possible sequence of distinct components $H=H_0, H_1, \dots, H_r$ such that $w \in V(H_1)$ and $H_i$ and $H_{i+1}$ are attached by $c_i$ for $0 \leq i < r-1$. Either there exists a smallest $i$ such that $H_i$ is a tree component, or $H_r$ is a cyclical component with an unoccupied vertex. Either way, we may perform legal moves in $H_i$ so that $c_{i-1}$ becomes unoccupied. We may then perform legal moves in $H_{i-1}$ resulting in $c_{i-2}$ being unoccupied. Repeating this process, $w$ becomes unoccupied. We then proceed as before.

Now let $T \in G^\times$ be a tree component. It is clear from our construction that if $T$ is not an $S_n(k)$ with $n \geq 2$, then the robots in $T$ can traverse the component. Otherwise, since $G$ is not a tree, there must be a cyclical component $H$ attached to $T$ at some $u \in V(T)$. Avoid taking $u$ as the major vertex if possible. Label the terminal paths of $T$ as $\ell_1, \ell_2, \dots, \ell_n$, so that, if $u$ is not the major vertex, then it lies in $\ell_n$.
If there is a robot stationed at the major vertex, then move this into an unoccupied terminal path. Clearly, any robot in $T$ may visit all vertices in its terminal path.

We now show how to move a robot $R_1$ in the leaf $\ell_i$ to $\ell_j$. Without changing the occupancy of $u$, we may ensure that a vertex in $N_H(u)$ is unoccupied, but that all other vertices in $V(H) \setminus \{ u \}$ are occupied. If $u$ is not the major vertex, then move any robot in $\ell_n$ to the unoccupied leaf and then $R_1$ to $u$. If $j=n$ and $u$ lies in $\ell_n$, then we are done. Otherwise, shift the vertices in $H$ so that a different robot $R_3$ occupies $u$. Move $R_3$ into $\ell_i$. Then, move $R_2$ stationed in $\ell_j$ to $u$. Shift the vertices in $H$ so that $R_1$ is stationed at $u$. Then, move $R_1$ to $\ell_j$.

Now let $u$ and $v$ be vertices lying in different components of $G^\times$ with a robot $R$ stationed at $v$. We show how to move $R$ to $u$. Let $v=v_0, v_1, \dots, v_r = u$ be a shortest path in $G$. Let $v_{i_1}, v_{i_2}, \dots, v_{i_s}$ be the subsequence of $v_0, \dots, v_{r-1}$ in $C$. For $1 \leq j < s$, $v_{i_j}$ and $v_{i_{j+1}}$ both lie in a some $H_j \in G^\times$. We have already seen how to legally move $R$ from $v_{i_j}$ to $v_{i_{j+1}}$. However, if $H_{j}$ is a subdivided star and $H_{j+2}$ is a cyclical component for some $j < s-1$, then we should ensure that there is no move in $V(H_{j+2}) \setminus \{ v_{i_{j+2}} \}$ before $R$ reaches $v_{i_{j+1}}$. In this case, we may also need to perform moves in $V(H_{j+1}) \setminus \{ v_{i_{j+1}} \}$ so that $v_{i_{j+2}}$ is occupied after $R$ reaches $v_{i_{j+1}}$. If instead $H_{j+2}$ is a tree component, then once $R$ reaches $v_{i_{j+1}}$, we perform moves in $V(H_{j+1}) \setminus \{ v_{i_{j+1}} \}$ so that $v_{i_{j+2}}$ is occupied if and only if it was occupied after the last move in $V(H_{j+2}) \setminus \{ v_{i_{j+2}} \}$ (or at the beginning if there has been no such move). On reaching $v_{i_s}$, $R$ is in the same component as $u$ and we may perform legal moves previously described to reach $u$. At all times, the set of occupied vertices is of the form  of Lemma~\ref{lem:quasiresolving}. Hence, $G$ is completely mobile-resolvable, and this completes the proof of Theorem~\ref{th:charat-non-CMR}.

\section{Circulant graphs $C_n(2)$}\label{sec:cir-Cn-2}\label{sec:circulant}

The \textit{circulant graph} $C_n(2)$ is obtained from a cycle $C_n$ ($n\ge 4)$ by adding an edge between any two vertices at distance $2$ in $C_n$. If $4\le n\le 7$, then some calculations by hand or a few simple arguments allow us to obtain the following values: $\mmd_c(C_4(2))=3$, $\mmd_c(C_5(2))=4$ (which hold because $C_4(2)\cong K_4$ and $C_5(2)\cong K_5$); $\mmd_c(C_6(2))=4$; and $\mmd_c(C_7(2))=3$. The remaining larger values of $n$ are considered at next.

\begin{proposition}
\label{prop:Cn2}
If $G = C_n(2)$ is a circulant graph of order $n \geq 8$, then
$$
\mmd_c(G) =
\begin{cases}
4; & \text{if } n \equiv 1 \pmod{4},\\
3; & \text{otherwise}.
\end{cases}
$$
\end{proposition}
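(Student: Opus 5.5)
The proof splits into a lower bound and an explicit construction. Throughout, write $n=4q+r$ with $r\in\{0,1,2,3\}$. Two vertices $i,j$ of $C_n(2)$ satisfy $d(i,j)=\lceil \delta/2\rceil$, where $\delta$ is their distance on the underlying cycle $C_n$.

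\emph{Lower bound $\mmd_c(G)\ge 3$.} The graph is $4$-regular and not a path, so $\dim(G)\ge 2$, and Lemma~\ref{lem:Delta lower bound} gives $\mmd_c(G)\ge 1+\lceil\log_3 4\rceil=3$.

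\emph{Extra robot when $n\equiv 1\pmod 4$.} I expect to show that no set of three robots can be completely mobile. The plan is as follows.
\begin{itemize}
\item First analyse which pairs $\{0,j\}$ are resolving. I expect none are when $n\equiv 1\pmod 4$, because the reflection $i\mapsto -i$ fixes $0$, and the pairs $\{i,i+1\}$ in the same distance layer from $0$ are hard to separate.
\item Then classify the resolving triples up to rotation. The key claim is that every resolving triple is ``rigid'': from every resolving triple, every move of a robot to a neighbour destroys resolvability. Equivalently, the graph on resolving triples whose edges are single moves either has no edges, or its components do not let a robot pass through the positions needed to visit all vertices. A weaker conclusion would also suffice: the robots' cyclic order is preserved in a way that prevents each robot from reaching every vertex.
\item The natural tool is the distance layers around a robot. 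For $n=4q+1$, vertex $0$ has layers $\{\pm(2t-1),\pm 2t\}$ of size $4$ for $t\le q$. Within a layer, a second robot at $j$ separates $i$ from $-i$, and $2t-1$ from $2t$, only under parity conditions on $j$ that fail at the antipodal region. I would compute the representations $r(x\mid\{a,b,c\})$ explicitly, then argue by case analysis on the cycle gaps between the three robots, showing that any single move leaves some pair of vertices unresolved.
\end{itemize}
This case analysis is the main obstacle. I would first try to show that resolving triples must contain a specific gap pattern that no move preserves, and check it numerically for $n=9,13$ to guide the general argument.

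\emph{Upper bounds: constructions.}
\begin{itemize}
\item For $n\not\equiv 1\pmod 4$: exhibit a resolving triple with a rotation-compatible move cycle, mimicking the cycle argument of Section~\ref{sec:basic}. For example, start with $\{0,1,a\}$ for a suitable offset $a$ depending on $n \bmod 4$. Then shift one robot at a time, $\{k,k+1,k+a\}\move\{k,k+1,k+a+1\}\move\cdots$, so that the configuration rotates, each intermediate set stays resolving, and the robots periodically permute. This gives each robot every vertex. The work is verifying resolvability of the intermediate triples via the layer description for each residue $r\in\{0,2,3\}$.
\item For $n\equiv 1\pmod 4$: four robots, e.g.\ $\{0,1,2,3\}$ moving as a sliding block around the cycle, with moves like $3\move 4$, $2\move 3$, and so on. Include a swap manoeuvre using the chords $i\sim i+2$ so that the robots can change order. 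Then check that each intermediate quadruple resolves.
\end{itemize}
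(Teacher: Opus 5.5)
\paragraph{Lower bound for $n\equiv 1\pmod 4$.} Your bound $\mmd_c(G)\ge 3$ from Lemma~\ref{lem:Delta lower bound} is fine. For $n\equiv 1\pmod 4$, however, you do not prove $\mmd_c(G)\ge 4$, and your plan targets the wrong obstruction. You expect resolving triples to exist but be immobile, or to preserve cyclic order. The cyclic-order version would not suffice anyway: robots that keep their order can still each visit every vertex by rotating together, as for $C_n$ in Section~\ref{sec:basic}.

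In fact, for $n\equiv 1\pmod 4$ no three vertices resolve $C_n(2)$ at all, i.e.\ $\dim(C_n(2))=4$; for $n=9$, all seven triples up to dihedral symmetry fail. So your ``key claim'' is vacuously true, and the bound follows at once from $\dim(G)\le\mmd(G)\le\mmd_c(G)$. The paper's proof only exhibits constructions and tacitly takes this lower bound from the known (mobile) metric dimension of $C_n(2)$.

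If you want to prove it yourself, your parity tool works once it is aimed at resolvability rather than mobility. With $n=4q+1$, a robot at $b$ separates $x,x+1$ exactly when $(x-b)\bmod n$ is even and different from $2q$. Impose this on the pairs $\{2t-1,2t\}$ and $\{-2t,-2t+1\}$, which lie in a single distance layer of a robot at $0$, and do this around each of the three robots in turn. This forces the cyclic gaps of the triple to be $(1,2q,2q)$, i.e.\ the triple is $\{0,1,2q+1\}$ up to symmetry. But for that triple, $2q$ and $2q+2$ both have representation $(q,q,1)$. Your first bullet is also empty: $G$ is $4$-regular, and a vertex of a resolving pair has degree at most $3$, so no pair resolves $G$ for any $n\ge 8$.

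\paragraph{Upper bounds.} These are likewise only outlines, and the one concrete move you propose for $n\equiv 1\pmod 4$ is illegal. The move $3\move 4$ yields $\{0,1,2,4\}$, under which $2q+1$ and $2q+2$ both have representation $(q,q,q,q-1)$. The paper uses chords instead:
\[
\{i,i+1,i+2,i+3\}\move\{i,i+1,i+3,i+4\}\move\{i+1,i+2,i+3,i+4\}.
\]
Since $n$ is odd, robots advancing by $2$ still reach every vertex, so no order-changing swap is needed.

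For $n\equiv 2,3\pmod 4$, your slide with $a=2$ is exactly the paper's sequence $\{0,1,2\}\move\{0,1,3\}\move\{0,2,3\}\move\{1,2,3\}$. Its real content is checking that vertex $3$ separates the pairs $\{j,n+1-j\}$ ($j$ even) left unresolved by $\{0,1\}$.

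For $n\equiv 0\pmod 4$, however, $a=2$ fails: already $\{0,1,3\}$ does not resolve $C_8(2)$, since $4$ and $5$ both get $(2,2,1)$. The paper anchors on a diametral pair $\{0,2k\}$ with $n=4k$. This pair leaves only the pairs $\{i,-i\}$ unresolved, so any third robot off $\{0,2k\}$ completes a resolving set. The robots then take turns being the free one, moving by unit steps; chord-only motion would confine a robot to one parity class when $n$ is even. Without a specified offset and verified intermediate sets, your treatment of this case is not yet a proof.
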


\begin{proof}
The proof is based on the existence of sequences of legal resolving movements where every intermediate state is a resolving set. Furthermore, we show that by repeating these sequences, every robot  eventually occupies every vertex $v \in V(G)$. Operations with the labels of the vertices are done modulo $n$.

\medskip
\noindent
\textbf{Case 1:} If $n \equiv 1 \pmod{4}$, then we begin with the robots placed at the set $\{i, i+1, i+2, i+3\}$ for some $i\in \{0,\dots, n-1\}$. The first sequence of moves is defined as:
 $$  \{i, i+1, i+2, i+3\} \move \{i, i+1, i+3, i+4\}  \move \{i+1, i+2, i+3, i+4\}.$$
These configurations are valid, as demonstrated in the article~\cite{camacho}. Note also that, since $n$ is odd, with a large enough number of repetitions of such moves, the jumps in the second step allow us to claim that the robots will eventually reach any vertex of the graph, regardless of the parity of its initial position.

\medskip
\noindent
\textbf{Case 2:} If $n \equiv 2,3 \pmod{4}$, then we begin with the robots placed at the set $S = \{0, 1, 2\}$, which is known to be a resolving set from~\cite{Javaid}. We focus on the move $S\move S' = \{0, 1, 3\}$.

The distance formula in $C_n(2)$ between any two vertices $i,j\in V(C_n(2))$ is given by
\begin{equation}
    \label{eq-dist}
d(i, j) = \min \left( \left\lceil \frac{|i-j|}{2} \right\rceil, \left\lceil \frac{n-|i-j|}{2} \right\rceil \right)
\end{equation}
We analyse the resolvability of $S' = \{0, 1, 3\}$ as follows.

Let us consider the set $\{0, 1\}$ and study the vertices that have equal distance to both. That is, assume there are $u,v\in V(C_n(2))$ such that $r(u|\{0,1\})=r(v|\{0,1\})$. This can only happen if $r(u|\{0,1\}) = (\lceil j/2 \rceil, \lceil (j-1)/2 \rceil)$ and $r(v,|\{0,1\}) = (\lceil (j-1)/2 \rceil, \lceil j/2 \rceil)$ for some even $j \in \{2, \dots, \lfloor n/2 \rfloor\}$. This in turns means that $u = j$ and $v = n+1-j$ are those pairs not resolved by the vertices $\{0, 1\}$. Consider now the vertex $3\in S'$.

\begin{itemize}
    \item For $u = j$: $d(u, 3) = \lceil \frac{j-3}{2} \rceil < \lceil \frac{j}{2} \rceil$.
    \item For $v = n+1-j$: $d(v, 3) = \lceil \frac{j+2}{2} \rceil = \lceil \frac{j}{2} \rceil + 1$.
\end{itemize}

So the third robot resolves the pairs $u,v$, ensuring that $S'$ is a resolving set. This allows to show that the sequence
\[ \{0, 1, 2\} \move \{0, 1, 3\} \move \{0, 2, 3\} \move \{1, 2, 3\} \]
is formed by legal resolving moves, where the set $\{0, 2, 3\}$ is a resolving set by symmetry with $S'$. By making analogous moves, the repetition of this process, leads to confirm that every robot can reach every vertex $v \in V(G)$, confirming that in this situation $\mmd_c(G) = 3$.

\medskip
\noindent
\textbf{Case 3:} If $n\equiv 0 \pmod{4}$, then we set $n=4k$ and begin with the robots placed at a set $S=\{0, 2k,j\}$ for some $j\in \{1,\dots,2k-1\}$. We begin by considering the set $B_0 = \{0, 2k\}$. Since $0$ and $2k$ are diametral, and based on the distance formula \eqref{eq-dist}, the pairs of vertices not resolved by $B_0$ have the form $i, n-i$ with $i \in \{1, \dots, 2k-1\}$. That is,
        \[ r(i|B_0) =  \left( \left\lceil \frac{i}{2} \right\rceil, \left\lceil \frac{2k-i}{2} \right\rceil \right), \]
    \[  r(n-i|B_0) = \left( \left\lceil \frac{i}{2} \right\rceil, \left\lceil \frac{4k-i-2k}{2} \right\rceil \right)=r(i|B_0) . \]
Consider next the third robot at a position $j \in \{1, \dots, 2k-1\}$. For a pair $\{i, n-i\}$ we have
    \[ d(i, j) = \min \left( \left\lceil \frac{|i - j|}{2} \right\rceil, \left\lceil \frac{n - |i - j|}{2} \right\rceil \right) \]
    \[ d(n-i, j) = \min \left( \left\lceil \frac{n - (i + j)}{2} \right\rceil, \left\lceil \frac{i + j}{2} \right\rceil \right) \]

We claim that these distances are distinct. To see this, consider the case $d(i, j) = \frac{|i-j|}{2}$ and $d(n-i, j) = \frac{n-(i+j)}{2}$. For these to be equal, the following must hold:
    \[ \frac{|i-j|}{2} = \frac{n-(i+j)}{2} \]
    Assuming $i > j$ for simplicity and multiplying by 2:
    \[ i - j = n - i - j \implies 2i = n \implies i = \frac{n}{2} = 2k \]
Since $i \notin \{0, 2k\}$, this is not possible. Thus, the values $d(i, j)$ and $d(n-i, j)$ are distinct. Similar conclusions are deduced if $d(i, j)$ and  $d(n-i, j)$ attain other values. This ensures that $B = \{0, 2k, j\}$ is a resolving set for the entire graph.

On the other hand, by the symmetry of the graph $G$, a set $B' = \{0, 2k, j'\}$ for any $j'\in \{2k+1,\dots,n-1\}$ is also a resolving set for $G$. Thus, the following facts are shown.

\medskip
\noindent
\textbf{Fact 1:} The set $B = \{0, 2k, j\}$ for any $j\in \{1,\dots,2k-1\}$ is a resolving set for $G$.

\medskip
\noindent
\textbf{Fact 2:} The set $B' = \{0, 2k, j'\}$ for any $j'\in \{2k+1,\dots,n-1\}$ is a resolving set for $G$.

\medskip
Now, let us begin with the initial configuration of the robots in $S = \{0, 2k, j\}$ for some $j\in \{1,\dots,2k-1\}$. The robots can move along all positions in $V(G)$ through the following sequence:
\begin{itemize}
    \item By the Fact 1, the robot placed at $j$ can legally move along all the vertices $i\in \{1,\dots,2k-1\}$ and finally remain at the vertex $1$.
    \item With the robots placed at $S'=\{0,1,2k\}$, the robot placed at $2k$ can move to the vertex $2k+1$, which is a legal move since the set $\{0,1,2k+1\}$ is a resolving set by Fact 2 and by the symmetry of $G$.
    \item The robot placed at $0$ can legally move (by using Fact 1) along all the vertices in $\{2k+2,\dots,n-1,0\}$ and finally remain at the vertex $2k+2$. Hence, the set set occupied by the robots is now $S''=\{1,2k+1,2k+2\}$.
    \item The robot placed at the vertex $1$ can legally move to the vertex $2$, which is based on Fact 2.
\end{itemize}
This process can be repeated, until all the vertices of $G$ are visited by all robots. This allows to confirm that $\mmd_c(G) = 3$ in this final situation, which completes the proof.
\end{proof}

\section{Open questions}\label{sec:open}

This paper has introduced a new model for network traversal by robots which always occupy a resolving set. We now conclude by suggesting several possible directions for further research.
\begin{itemize}
  \item A problem we have not addressed is that of finding the smallest number of moves required to traverse every vertex with every robot for a given completely mobile resolving set. To this end, it would be useful to develop algorithms to optimise the traversal with respect to the number of moves taken.
  \item Following Section \ref{sec:cir-Cn-2}, we propose a study of the completely mobile metric dimension of circulant graphs in general. Other families of graphs that could be studied include toruses, cylinders, Hamming graphs, and hypercubes.
  \item In this work, we have seen examples of graphs $G$ for which $\mmd(G)=\mmd_c(G)$. This suggests trying to find a characterisation of the class of graphs which satisfy this equality.
  \item It would also be interesting to find a characterisation of the class of graphs with completely mobile metric dimension $2$.
  \item Completely mobile resolving sets are a stronger version of the mobile resolving sets studied in~\cite{camacho}. This variant might be too strong for a practical implementation due to the potentially large number of moves required for a full traversal. Therefore, a future study could consider restricting the `range' that particular robots are required to traverse, or placing a limit on the number of vertices they are required to visit.
  \item As in many graph theoretical investigations, it is of interest to consider the complexity of computing the completely mobile metric dimension of graphs. For instance, is this problem in NP? Note that we may determine whether a given graph $G$ has $\mmd_c(G) = k$ in polynomial time.
\end{itemize}

\section*{Acknowledgements}

Dorota Kuziak and Ismael G. Yero have been partially supported by the Spanish Ministry of Science and Innovation through the grant PID2023-146643NB-I00.

\section*{Data availability}

No specific data was used in this investigation.

\section*{Conflict of interest disclosure statement}

We have no conflict of interest to declare.


\end{document}